\newtheorem{theorem}{Theorem}[section]
\newtheorem{proposition}[theorem]{Proposition}
\newtheorem{corollary}[theorem]{Corollary}
\newtheorem{lemma}[theorem]{Lemma}
\theoremstyle{definition}
\newtheorem{definition}[theorem]{Definition}
\newtheorem{remark}[theorem]{Remark}
\newcommand{\ii}{\mathrm{i}}
\title{A B\'{e}koll\`{e}-Bonami Class of Weights for Certain Pseudoconvex Domains}
\author{Zhenghui Huo, Nathan A. Wagner\footnote{Supported by NSF GRF, grant number DGE-1745038}, and Brett D. Wick\footnote{Supported by NSF grants DMS-1800057 and DMS-1560955, as well as ARC DP190100970.}}
\begin{document}

\maketitle

\begin{abstract} We prove the weighted $L^p$ regularity of the ordinary Bergman projection on certain pseudoconvex domains where the weight belongs to an appropriate generalization of the B\'{e}koll\`{e}-Bonami class. The main tools used are estimates on the Bergman kernel obtained by McNeal and B\'{e}koll\`{e}'s original approach of proving a good-lambda inequality. 

\end{abstract}

\section{Introduction}

It is a well-known result of B\'{e}koll\`{e} and Bonami that the Bergman projection $\mathcal{P}$ is bounded on $L^p_\sigma(\mathbb{B}_n)$, where $\mathbb{B}_n$ is the unit ball, if and only if the weight $\sigma$ belongs to the so-called B\'{e}koll\`{e}-Bonami class of weights (see \cite{Bekolle}, \cite{Bekolle2}, \cite{RTW}). These weights are defined by the following Muckenhoupt-type condition:

$$ \sup_{B(w,R); R>1-|w|} \left(\frac{1}{\mu(B(w,R))}\int_{B(w,R)} \sigma \mathop{d\mu}\right)\left(\frac{1}{\mu(B(w,R))}\int_{B(w,R)} \sigma^{-1/(p-1)} \mathop{d\mu}\right)^{p-1}<\infty,$$

\noindent where $\mu$ denotes Lebesgue measure and the balls $B$ are taken in the quasi-metric defined by $$d(w,z)=||w|-|z||+\left|1-\frac{\langle w,z \rangle}{|w||z|}\right|.$$

\noindent The general framework used in B\'{e}koll\`{e}'s paper is singular integral theory: crucial smoothness estimates are obtained on the kernel with respect to this metric. With this important ingredient, familiar tools from harmonic analysis such as good-$\lambda$ inequalities can be used to prove weighted estimates.

A natural question is whether B\'{e}koll\`{e}'s result can be generalized in a suitable sense to more general classes of domains. Let $\Omega \subset \mathbb{C}^n$ be a pseudoconvex domain with $C^\infty$ defining function $\rho$. In certain situations (see \cite{Mc2}), we can introduce a quasi-distance $d$ on $\Omega$ so that, with respect to the quasi-metric $d$, the Bergman kernel $K(z,w)$ satisfies certain appropriate size and smoothness estimates. In particular, this can be done when the domain is strongly pseudoconvex, convex of finite type, pseudoconvex of finite type in $\mathbb{C}^2$, or decoupled in $\mathbb{C}^n$ . The upshot of this approach is that the theory of Calder\'{o}n-Zygmund operators can be brought to bear when such estimates are proven for the Bergman kernel. McNeal originally used these estimates in \cite{Mc2} to prove $L^p$ bounds on the Bergman projection.  Using the same singular integral theory, these estimates also facilitate the development of an appropriate $\mathcal{B}_p$-type class of weights $\sigma$ for which the Bergman projection $\mathcal{P}$ is bounded on $L^p_\sigma(\Omega)$, which is the focus of this paper. In particular, we prove weighted estimates for a class of domains we call \emph{simple domains} (defined precisely in the following section). The important thing to keep in mind is that in each case we have a quasi-metric $d$ that reflects the geometry of the domain $\Omega$.

By a weight $\sigma$, we mean a locally integrable function on $\Omega$ that is positive almost everywhere. Here we define an appropriate class of weights:

\begin{definition}
For $1<p<\infty$, we say a weight $\sigma$ belongs to the B\'{e}koll\`{e}-Bonami ($\mathcal{B}_p$) class associated to the pseudo-metric $d$ if $\sigma$ and $\sigma'=\sigma^{-1/(p-1)}$ are integrable on $\Omega$ and the following quantity is finite:

$$[\sigma]_{\mathcal{B}_p}:= \sup_{B(w,R); R>d(w,b\Omega)} \left(\frac{1}{\mu(B(w,R))}\int_{B(w,R)} \sigma \mathop{d\mu}\right)\left(\frac{1}{\mu(B(w,R))}\int_{B(w,R)} \sigma^{-1/(p-1)} \mathop{d\mu}\right)^{p-1}.$$

\end{definition}

Our ultimate goal is to prove the following theorem, which is our principal result:

\begin{theorem}\label{main} Let $\Omega$ be a simple domain and $\mathcal{P}$ denote the Bergman projection on $\Omega$. If $\sigma \in \mathcal{B}_p$, then $||\mathcal{P}f||_{L^p_{\sigma}(\Omega)} \lesssim ||f||_{L^p_{\sigma}(\Omega)}$, where $1<p<\infty$.

\end{theorem}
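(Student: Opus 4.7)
The plan is to follow B\'{e}koll\`{e}'s original good-$\lambda$ strategy, adapted to the quasi-metric $d$ on a simple domain using McNeal's size and smoothness estimates on the Bergman kernel $K(z,w)$. There are three principal ingredients: weighted boundedness of an admissible maximal operator, a comparison of the Bergman projection with that maximal operator via the kernel estimates, and a good-$\lambda$ inequality combining the two.

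First I would introduce the admissible maximal function
$$Mf(z) = \sup_{z \in B(w,R),\, R > d(w, b\Omega)} \frac{1}{\mu(B(w,R))} \int_{B(w,R)} |f|\, \md\mu,$$
where the supremum runs over admissible balls containing $z$. Since the $\mathcal{B}_p$ condition is exactly a Muckenhoupt-type condition restricted to admissible balls, a standard Vitali-type covering argument on the space of homogeneous type $(\Omega,d,\mu)$, together with a reverse H\"{o}lder / extrapolation step, should yield that $M$ is bounded on $L^p_\sigma(\Omega)$ for every $\sigma \in \mathcal{B}_p$, with operator norm controlled by $[\sigma]_{\mathcal{B}_p}$.

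The heart of the matter is a good-$\lambda$ inequality of the form
$$\sigma\bigl(\{z \in \Omega : |\mathcal{P}f(z)| > 2\lambda,\; Mf(z) \le \gamma\lambda\}\bigr) \le C\, \gamma^{\epsilon}\, \sigma\bigl(\{z \in \Omega : |\mathcal{P}f(z)| > \lambda\}\bigr)$$
for some $\epsilon > 0$, all sufficiently small $\gamma > 0$, and a constant $C$ depending only on $[\sigma]_{\mathcal{B}_p}$. To prove it, I would decompose the level set $\{|\mathcal{P}f| > \lambda\}$ using a Whitney-type cover $\{B_j\}$ by admissible balls with radii comparable to $d(c(B_j), b\Omega)$. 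On each $B_j$ one splits $K(z,w) = K_{\mathrm{near}}(z,w) + K_{\mathrm{far}}(z,w)$ according to a cutoff at a scale comparable to the radius of $B_j$: McNeal's size estimate controls the contribution of $K_{\mathrm{near}}$ pointwise by a multiple of $Mf(z)$, while his H\"{o}lder-type smoothness estimate forces the oscillation of $\int K_{\mathrm{far}}(\cdot,w) f(w)\, \md\mu(w)$ on $B_j$ to be small. Multiplying the resulting good-$\lambda$ inequality by $p\lambda^{p-1}$, integrating against $\sigma\,\md\mu$, and invoking the weighted $L^p_\sigma$ boundedness of $M$ then delivers the theorem.

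The main obstacle will be executing the kernel decomposition uniformly in a quasi-metric that distorts anisotropically near the boundary. Because the $\mathcal{B}_p$ supremum only sees admissible balls, every Whitney ball entering the argument must satisfy $R(B_j) \gtrsim d(c(B_j), b\Omega)$, and arranging such a cover of $\{|\mathcal{P}f| > \lambda\}$ while maintaining bounded overlap requires care. A secondary technicality is verifying that the H\"{o}lder exponent provided by McNeal's smoothness estimate on $(\Omega,d)$ is strictly positive and pairs correctly with the doubling exponent of $\mu$ with respect to $d$, so that the gain $\gamma^{\epsilon}$ in the good-$\lambda$ inequality is genuinely a gain and the standard bootstrapping converges to yield the $L^p_\sigma$ estimate.
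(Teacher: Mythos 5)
Your outline matches the paper's strategy at the highest level: define an admissible maximal operator $\mathcal{M}$, prove its weighted boundedness, and establish a good-$\lambda$ inequality comparing $\mathcal{P}^+$ to $\mathcal{M}$. However, two of the steps you wave through are exactly where the real difficulty lives, and the techniques you invoke for them would not work.

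First, the claim that a ``standard Vitali-type covering argument \ldots together with a reverse H\"{o}lder / extrapolation step'' yields $\|\mathcal{M}f\|_{L^p_\sigma} \lesssim \|f\|_{L^p_\sigma}$ is a genuine gap. A $\mathcal{B}_p$ weight is only tested on balls that touch $b\Omega$; it need not satisfy any reverse H\"{o}lder inequality, self-improvement, or open-endedness property of the type enjoyed by classical $A_p$ weights, because it is unconstrained on small balls deep inside $\Omega$. Extrapolation in the usual sense is therefore unavailable. The paper circumvents this by introducing a regularizing operator $R_k$ that averages over balls of radius $k\,d(z,b\Omega)$, proving $\mathcal{M}f \lesssim \mathcal{M}(R_k f)$ and $R_k(\mathcal{M}g) \approx \mathcal{M}g$, and — crucially — showing that $R_k(\sigma)$ lies in the genuine $A_p$ class so that ordinary weighted theory for the Hardy--Littlewood maximal operator applies. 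That regularization step is the mechanism by which a condition tested only on admissible balls is upgraded to a usable $A_p$-type condition; without it (or something playing the same role) the maximal estimate does not follow.

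Second, to pass from $\mu(F) \lesssim \gamma^{1/m}\mu(B_0)$ to $\sigma(F) \lesssim \gamma^{\delta}\sigma(B_0)$ in the good-$\lambda$ inequality you need some $A_\infty$-type ``fairness'' property for $\sigma$, and a $\mathcal{B}_p$ weight does not automatically have it for the same reason as above. The paper again routes this through $R_k(\sigma) \in A_p \subset A_\infty$: it shows $\sigma(F) \lesssim R_k(\sigma)(\hat{F})$ and $R_k(\sigma)(B_0') \lesssim \sigma(B_0)$ for suitably inflated sets, so that the $A_\infty$ fairness of the regularized weight can be borrowed. Moreover, the bound $\mu(F) \lesssim \gamma^{1/m}\mu(B_0)$ is itself nontrivial: it requires showing that the bad set sits in a thin slab near $b\Omega$ (Lemma~\ref{radius control}), and then an explicit integration in McNeal's anisotropic local coordinates using $\partial\rho/\partial x_1 > c$ (Lemma~\ref{measure of top of ball}). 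Your proposal treats both of these as routine; they are the technical core of the argument and neither follows from the generic covering/decomposition machinery you cite. The near/far splitting of the kernel you describe is essentially the $f = f_1 + f_2$ split in the paper and that part of your plan is sound.
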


In what follows, since there are so many constants to keep track of we use the notation $A \lesssim B$ to mean that there exists a constant $C$, independent of obvious parameters, so that $A \leq CB$. The symbols $\gtrsim$ and $\approx$ are also used with obvious meanings. 

The authors would also like to acknowledge the work of Chun Gan, Bingyang Hu, and Ilyas Khan, who independently obtained a result, concerning weighted $L^p$ estimates for convex finite type domains, that corresponds to a special case of the main theorem in this paper at around the same time (see \cite{Hu}). Their result for convex domains can be seen to be equivalent to ours, though they use very different machinery and phrase the Muckenhoupt condition on their weights in terms of what they call ``dyadic flow tents." 

\section{Background and Definitions}

All of the domains in this paper are pseudoconvex of finite type in the sense of D'Angelo (see \cite{DA}). In what follows we assume that $\Omega$ is one of the following types of pseudoconvex domains:
\begin{enumerate}
\item strongly pseudoconvex;
\item convex of finite type;
\item finite type in $\mathbb{C}^2$;
\item decoupled in $\mathbb{C}^n.$
\end{enumerate}

Following McNeal in \cite{Mc5}, we will refer to such a domain as a \emph{simple domain}. In \cite{Mc5}, McNeal shows that estimates for the Bergman kernel previously obtained in \cite{Mc3},\cite{Mc1}, and \cite{Mc4} actually fall into a unified framework. It should be noted that historically, estimates for the Bergman kernel on strongly pseudoconvex domains were obtained first, using different methods, for example see \cite{CF}. Strongly pseudoconvex domains were also not one of the types considered in \cite{Mc2}, as the $L^p$ mapping properties of the Bergman projection on these domains were already known (see \cite{PS}). However, in \cite{Mc5} McNeal demonstrates that strongly pseudoconvex domains fall into the same paradigm as the other domains considered. This means that one can use the exact same singular integral machinery as in \cite{Mc2} to prove the $L^p$ regularity of the Bergman projection on strongly pseudoconvex domains, even though this was not originally how this result was obtained. Results on the $L^p$ regularity of the Bergman projection on smooth domains have actually been obtained in a more general context (see \cite{KR}), but in this paper we focus on these \emph{simple domains} since the metric in each of these cases leads to a space of homogeneous type.

We describe, first in qualitative terms, the scaling approach used by McNeal to obtain kernel estimates on all of these domains. Let $U$ be a small neighborhood of a point $p \in b\Omega$ and fix a point $q \in U$. A holomorphic coordinate change $z=\Phi(w)$ with $\Phi(q)=0$ is employed so that $z_1$ is essentially in the complex normal direction (i.e the complex direction normal to $b\Omega$ at $\pi(q)$, where $\pi$ denotes the orthogonal projection to the boundary). In fact, the coordinates can be chosen so $\frac{\partial \rho}{\partial z_1}$ is non-vanishing on $U$. The coordinates $z_2,z_3,\dots,z_n$ are basically the complex tangential directions. The geometric properties of the domain dictate the following: how far can one move in each of the complex directions $z_1,z_2,\dots,z_n$ if one does not want to perturb the defining function $\rho(z)$ by more than $\delta$ (more precisely, a universal constant times $\delta$)? Clearly, one can move no more than some constant multiple of $\delta$ in the radial direction, but it is not at all clear for an arbitrary domain what the answer is for the tangential directions. In fact, roughly speaking, the finite type property of the domain is precisely what ensures that the domain is not ``too flat" and that the amount we can move in the tangential directions is somehow appropriately controlled. We make this notion precise in the following proposition, which can be found in \cite{Mc5}:

\begin{proposition}\label{polydiscs}
Let $\Omega$ be a simple domain. Fix a point $p \in b\Omega$. Then there exists a small neighborhood $U$ such that for sufficiently small $\delta>0$ and any point $q \in U \cap \Omega$, there exist holomorphic coordinates $z=(z_1,z_2,\dots,z_n)$ centered at $q$ and defined on $U$ and quantities $\tau_1(q,\delta),\tau_2(q,\delta),\dots,\tau_n(q,\delta)$ with $\tau_1(q,\delta)=\delta$ so that if we consider the polydisc:
$$P(q,\delta)=\{z \in U: |z_j| < \tau_j(q,\delta), 1 \leq j \leq n\},$$ 

\noindent one has the property that if $z \in P(q,\delta) \cap \Omega$, then $|\rho(z)-\rho(q)| \lesssim \delta$, where the implicit constant is independent of $q$ and $\delta$. Moreover, $\left|\dfrac{\partial \rho}{\partial z_1}\right|> c$ for some $c>0$ on $U \cap \Omega$. In particular, $\dfrac{\partial \rho}{\partial \text{Re}\,z_1 }>c$ on $U \cap \Omega$.
\end{proposition}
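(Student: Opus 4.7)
The plan is a case-by-case construction: the four flavors of simple domain each require a somewhat different recipe for the coordinates and the weights $\tau_j$, and the proposition is really a unifying statement saying that the same polydisc formalism accommodates them all. In every case the first coordinate $z_1$ is chosen to be essentially the complex normal direction at $\pi(q)$, the nearest boundary point to $q$. Writing $w=\Phi^{-1}(z)$ with $\Phi(q)=0$, after a unitary rotation we can arrange that $\partial\rho/\partial z_1$ differs from $|\partial\rho|$ by a vanishingly small amount at $\pi(q)$. Shrinking $U$ and using continuity of $\partial\rho$, this gives $|\partial\rho/\partial z_1|>c$ on $U\cap\Omega$, which, since $\rho$ is real, yields the real-derivative bound. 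Setting $\tau_1(q,\delta)=\delta$ is then forced: moving by $O(\delta)$ in the complex normal direction produces a change of $\rho$ of order $\delta$.

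For each domain class the tangential weights $\tau_2,\dots,\tau_n$ are then defined, and verifying $|\rho(z)-\rho(q)|\lesssim\delta$ on $P(q,\delta)\cap\Omega$ reduces to Taylor expanding $\rho$ about $q$ in the new coordinates and showing that each monomial contribution in $z_2,\dots,z_n$ is $\lesssim\delta$. In the strongly pseudoconvex case, uniform positive-definiteness of the Levi form on a compact piece of the boundary allows $\tau_j(q,\delta)=\delta^{1/2}$ for $2\leq j\leq n$, and the bound is immediate from the quadratic term. For convex of finite type domains, one uses McNeal's extremal-basis construction: $\tau_j(q,\delta)$ is essentially the largest $s$ such that $\rho$ restricted to the segment of length $s$ through $q$ in the $j$-th basis direction varies by at most $\delta$, and D'Angelo finite type together with convexity provides the uniform polynomial lower bound $\tau_j(q,\delta)\gtrsim\delta^{1/m}$ as well as the simultaneous compatibility needed across the $j$'s. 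For finite type domains in $\mathbb{C}^2$ only one tangential weight is required, and $\tau_2(q,\delta)$ is controlled directly by the pullback of $\rho$ to the one-dimensional complex tangent, with exponent bounded by the D'Angelo type at $p$. For decoupled domains the defining function has the form $\mathrm{Re}\,z_1+\sum_{j=2}^n P_j(z_j)$ modulo a smooth perturbation near $p$, so one handles each tangential direction independently and reduces each $\tau_j$ to the $\mathbb{C}^2$ case.

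The main obstacle is the uniform construction of the weights $\tau_j(q,\delta)$ in the convex finite type case. McNeal's minimal/extremal basis lemma, showing that one can pick the tangential directions and their weights so that everything is comparable up to universal constants as $q$ varies within $U$ and $\delta$ varies over an appropriate range, is a delicate piece of several complex variables analysis and is the real content of the proposition. The rest, namely the coordinate change, the estimate on $\partial\rho/\partial z_1$, and the Taylor expansion verifying the size bound, is essentially routine once the weights are in hand.
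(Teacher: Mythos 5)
The paper does not prove Proposition~\ref{polydiscs} itself; it states the result and cites McNeal \cite{Mc5}, which assembles the separate constructions from \cite{Mc3}, \cite{Mc1}, and \cite{Mc4} (and the classical strongly pseudoconvex analysis) into one framework. Your sketch faithfully reproduces that strategy: a case-by-case construction in which $z_1$ is essentially the complex normal direction, $\tau_1=\delta$, and the tangential weights $\tau_j$ come from the uniform positivity of the Levi form (strongly pseudoconvex), McNeal's extremal/minimal basis (convex finite type), the pullback of $\rho$ to the one-dimensional complex tangent ($\mathbb{C}^2$ finite type), or a coordinate-by-coordinate reduction (decoupled). You also correctly identify the extremal-basis lemma in the convex case as the genuinely hard ingredient; the rest is Taylor expansion and continuity, as you say.

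One small logical gap worth flagging: you infer $\partial\rho/\partial\operatorname{Re} z_1 > c$ from $|\partial\rho/\partial z_1|>c$ ``since $\rho$ is real.'' That inference does not follow as stated --- a complex derivative of large modulus could be nearly purely imaginary, in which case $\partial\rho/\partial\operatorname{Re} z_1$ would be small. What actually makes the conclusion true is the normalization built into the coordinate change: one arranges at $\pi(q)$ that $\operatorname{Re} z_1$ points along the outward real normal and $\operatorname{Im} z_1$ along its complex rotation, so that $\partial\rho/\partial z_1(\pi(q))$ is real and positive (equal to $\tfrac12\,\partial\rho/\partial\operatorname{Re} z_1$), and then continuity on a shrunken $U$ carries both the modulus bound and the positivity of the real partial. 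You have the right coordinate choice in hand; the deduction should appeal to it explicitly rather than to the mere real-valuedness of $\rho$.
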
 

The coordinates $(z_1,z_2,\dots,z_n)$ can depend on $\delta$, for example in the convex finite type case (see \cite{Mc1}), but $z_1$ is always essentially the radial direction. Crucially, the polydiscs also satisfy a kind of doubling property: 

\begin{proposition}[\cite{Mc5},\cite{Mc2}]\label{doubling poly}
There exist independent constants $C,D$ so the following hold for the polydiscs:
\begin{enumerate}
\item If $P(q_1,\delta)\cap P(q_2, \delta) \neq \emptyset$, then $P(q_1,\delta) \subset C P(q_2,\delta)$ and $P(q_2,\delta)\subset C P(q_1,\delta)$.

\item There holds $P(q_1,2 \delta) \subset D P(q_1, \delta).$
\end{enumerate}
\end{proposition}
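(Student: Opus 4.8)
The plan is to reduce both assertions to two quantitative features of McNeal's construction of the quantities $\tau_j(q,\delta)$ and of the associated holomorphic coordinate systems, and then to read off the stated set-theoretic inclusions. Throughout, for a polydisc $P(q,\delta)=\{z:|z_j|<\tau_j(q,\delta)\}$ written in the coordinates furnished by Proposition~\ref{polydiscs} and a constant $C\ge1$, I write $CP(q,\delta)$ for $\{z:|z_j|<C\tau_j(q,\delta)\}$ in those same coordinates. The two features I would invoke are: (i) a \emph{scaling estimate}, that for $\lambda\ge1$ one has $\tau_j(q,\lambda\delta)\lesssim\lambda\,\tau_j(q,\delta)$ uniformly in $q$ and $\delta$ — the radial direction $\tau_1(q,\delta)=\delta$ is the extremal case, and the finite type hypothesis forces every other direction to grow no faster than linearly in $\delta$; and (ii) a \emph{stability estimate}, that if $q'\in C'P(q,\delta)$ for a fixed $C'$ then $\tau_j(q',\delta)\approx\tau_j(q,\delta)$ for every $j$, and moreover the holomorphic change of coordinates relating the $q$-centered scale-$\delta$ coordinates to the $q'$-centered scale-$\delta$ coordinates — and likewise the one relating the $q$-centered coordinates at scales $\delta$ and $2\delta$ — distorts the corresponding product of discs only by a uniformly bounded factor. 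These are precisely the comparability lemmas for the $\tau_j$ isolated in \cite{Mc5} (and, case by case, in \cite{Mc3,Mc1,Mc4}), so I would quote them rather than reprove them.

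Granting (i) and (ii), here is how I would finish. For assertion (2), I work in the $q_1$-centered scale-$\delta$ coordinates: by (ii) the change of coordinates from the scale-$2\delta$ system carries $\{|w_j|<\tau_j(q_1,2\delta)\}$ into a set comparable to $\{|z_j|<\tau_j(q_1,\delta)\}$, and applying (i) with $\lambda=2$ to absorb the factor $\tau_j(q_1,2\delta)/\tau_j(q_1,\delta)$ yields $P(q_1,2\delta)\subseteq DP(q_1,\delta)$ for a fixed $D$. For assertion (1), pick $x\in P(q_1,\delta)\cap P(q_2,\delta)$; two applications of (ii) give $\tau_j(q_1,\delta)\approx\tau_j(x,\delta)\approx\tau_j(q_2,\delta)$ for all $j$, and the scale-$\delta$ coordinate systems centered at $q_1$, at $x$, and at $q_2$ are mutually related by coordinate changes of bounded distortion. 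Re-expressing $P(q_1,\delta)$ in the $q_2$-centered scale-$\delta$ coordinates, a point $z\in P(q_1,\delta)$ then lies within a fixed multiple of $\tau_j(q_2,\delta)$ of $x$ in the $j$-th coordinate while $x$ lies within $\tau_j(q_2,\delta)$ of $q_2$, so estimating directly in these coordinates (equivalently, using the quasi-triangle inequality for the induced quasi-metric) gives $z\in CP(q_2,\delta)$; symmetry gives the reverse inclusion.

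The genuine obstacle is establishing (i) and (ii) with constants independent of $q$ and $\delta$. As the discussion following Proposition~\ref{polydiscs} stresses, not only the sizes $\tau_j(q,\delta)$ but also the \emph{shape} of the polydisc — the choice of holomorphic coordinates — really does depend on $q$ and on $\delta$, so one cannot merely compare radii and must instead control the coordinate changes between nearby centers and between comparable scales. This control is exactly what McNeal's scaling analysis supplies; it runs somewhat differently in the four cases (the Levi form being bounded above and below for strongly pseudoconvex domains; the behavior of $\rho$ along complex lines for convex finite type domains and for domains of finite type in $\mathbb{C}^2$; a one-variable analysis in each coordinate for decoupled domains) but is packaged in a uniform way in \cite{Mc5}. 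Once (i) and (ii) are in hand the inclusions above are routine, so in the write-up I would present the argument as a short deduction citing \cite{Mc5} and \cite{Mc2}.
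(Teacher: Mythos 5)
The paper itself supplies no proof of this proposition: it is stated with the attribution $[\text{McNeal}]$ and cited directly to \cite{Mc5} and \cite{Mc2} as a known property of McNeal's polydiscs. So there is no internal argument in the paper to compare your write-up against; the relevant comparison is with the cited sources.

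That said, your reconstruction is a sound sketch of how the two inclusions follow from McNeal's comparability machinery, and it identifies the right load-bearing ingredients. The two facts you isolate — the scaling estimate $\tau_j(q,\lambda\delta)\lesssim\lambda\tau_j(q,\delta)$ for $\lambda\ge1$, and the stability estimate that $q'\in C'P(q,\delta)$ forces $\tau_j(q',\delta)\approx\tau_j(q,\delta)$ together with bounded distortion of the coordinate changes between nearby centers and nearby scales — are indeed the comparability lemmas on which McNeal's unified treatment in \cite{Mc5} rests, and they appear case by case in \cite{Mc3,Mc1,Mc4}. You are also right that the genuine subtlety is not comparing the radii $\tau_j$ but controlling the $\delta$- and $q$-dependent changes of holomorphic coordinates, so that ``$z\in CP(q_2,\delta)$'' can be evaluated in the $q_2$-centered system; flagging that explicitly, and routing the argument through the common point $x\in P(q_1,\delta)\cap P(q_2,\delta)$ rather than trying to compare $q_1$ and $q_2$ directly, is the correct structure. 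Two minor remarks: the linear bound in your (i) is not sharp (for $j\ge2$ one typically has growth like $\lambda^{1/2}$ or $\lambda^{1/m_j}$), but the linear bound is all that is needed for the doubling inclusion, so no harm is done; and the parenthetical appeal to the ``quasi-triangle inequality for $M$'' in your proof of (1) should be dropped or treated only as an aside, since the engulfing property (1) is itself one of the ingredients used to verify that $M$ is a quasi-metric, and the direct coordinate estimate you give already suffices and avoids any appearance of circularity. As you say, the appropriate write-up is precisely what the paper does: cite \cite{Mc5} and \cite{Mc2}.
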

One can now introduce a local quasi-metric $M$ on $U \cap \Omega$ (see \cite{Mc2}):

\begin{definition} Define the following function on $U \cap \Omega \times U \cap \Omega$:
$$M(z,w)= \inf_{\varepsilon>0}\{\varepsilon: w \in P(z, \varepsilon)\}.$$
Then $M$ defines a quasi-metric on $U \cap \Omega$.
\end{definition}

Note that the volume of a polydisc $P(q,\delta)$ is comparable to $\delta^2 \prod_{j=2}^{n} \left(\tau_j(q,\delta)\right)^2$. In fact this polydisc is comparable to a non-isotropic ball of radius $\delta$ centered at $q$ in the local quasi-metric. To extend this quasi-metric $M$ to a global quasi-metric $d$ defined on $\Omega \times \Omega$, one can just patch the local metrics defined on $U_j \cap \Omega$ together in an appropriate way. The resulting quasi-metric is not continuous, but satisfies all the relevant properties. The balls in this quasi-metric still have volume comparable to a polydisc if they are near the boundary and have small radius. We refer the reader to \cite{Mc2} for more details on this matter. We remark that this metric is technically only defined on $N \times N$, where $N$ is a relative neighborhood of the boundary, in particular the union of the $U_j \cap \Omega$. However, the next lemma shows that this does not present us with any difficulties.

\begin{lemma}\label{reduction to neighborhood}
Let $\mathcal{P} \lvert_{N}$ denote the Bergman projection restricted to $N$; that is, for $f \in L^2(\Omega)$ and $z \in \Omega$,
$$\mathcal{P}(f)(z):= \chi_{N}(z)\int_{N} K(z,w) f(w) \mathop{d \mu(w)},$$ where $K(z,w)$ denotes the Bergman kernel for $\Omega$ and $\chi$ denotes characteristic function.

Then, if $\mathcal{P} \lvert_{N}$ is bounded on  $L^p_{\sigma}(\Omega)$ and $\sigma, \sigma'=\sigma^{-1/(p-1)}$ are integrable on $\Omega$, then $\mathcal{P}$ is bounded on $L^p_{\sigma}(\Omega)$ .
\begin{proof}
Take $f \in L^p_{\sigma}(\Omega)$ and write $f=f_1+f_2$, where $f_1:= f \chi_{N}$ and $f_2:= f \chi_{\Omega \setminus N}$. Then write
\begin{eqnarray*}
||\mathcal{P}f||_{L^p_{\sigma}(\Omega)} & \leq & ||\mathcal{P}f_1||_{L^p_{\sigma}(\Omega)}+ ||\mathcal{P}f_2||_{L^p_{\sigma}(\Omega)} \\
& = & ||\mathcal{P}f_1||_{L^p_{\sigma}(N)}+ ||\mathcal{P}f_2||_{L^p_{\sigma}(N)}+ ||\mathcal{P}f_1||_{L^p_{\sigma}(\Omega \setminus N)}+ ||\mathcal{P}f_2||_{L^p_{\sigma}(\Omega \setminus N)}\\
& = & ||\mathcal{P}\lvert_{N}f||_{L^p_{\sigma}(\Omega)}+ ||\mathcal{P}f_2||_{L^p_{\sigma}(N)}+ ||\mathcal{P}f_1||_{L^p_{\sigma}(\Omega \setminus N)}+ ||\mathcal{P}f_2||_{L^p_{\sigma}(\Omega \setminus N)}\\
& \lesssim & ||f||_{L^p_{\sigma}(\Omega)}+ ||\mathcal{P}f_2||_{L^p_{\sigma}(N)}+ ||\mathcal{P}f_1||_{L^p_{\sigma}(\Omega \setminus N)}+ ||\mathcal{P}f_2||_{L^p_{\sigma}(\Omega \setminus N)}
\end{eqnarray*}
where in the last line we used the hypothesis on $\mathcal{P}\lvert_{N}$. Thus, if we can control the last three terms, we are done. Recall by Kerzman's Theorem, the Bergman kernel extends to a $C^\infty$ function on $\overline{\Omega} \times \overline{\Omega} \setminus \triangle(b \Omega \times b \Omega)$, where $\triangle(b \Omega \times b \Omega)$ denotes the boundary diagonal $\{(z,z): z \in b \Omega \}$(see \cite{Kerz}, \cite{Boas}). Thus, in particular $K(z,w)$ is bounded on compact subsets off the boundary diagonal. We show how this is applied to the term  $||\mathcal{P}f_2||_{L^p_{\sigma}(N)}$, as the other terms can be handled similarly. Then, using this fact about $K(z,w)$, H\"{o}lder's inequality, and the hypotheses on $\sigma$,

\begin{eqnarray*}
||\mathcal{P}f_2||^p_{L^p_{\sigma}(N)}& = & \int_{N} \left| \int_{\Omega \setminus N} K(z,w) f(w) \mathop{d \mu(w)} \right|^p \sigma(z) \mathop{d\mu(z)}\\
& \leq & \int_{N}  \left(\int_{\Omega \setminus N} |K(z,w)| |f(w)| \mathop{d \mu(w)} \right)^p \sigma(z) \mathop{d\mu(z)}\\
& \lesssim & \int_{\Omega}  \left(\int_{\Omega }  |f(w)| \mathop{d \mu(w)} \right)^p \sigma(z) \mathop{d\mu(z)}\\
& = & \sigma(\Omega) \left(\int_{\Omega }  |f(w)| \sigma(w)^{1/p} \sigma(w)^{-1/p} \mathop{d \mu(w)} \right)^p\\
& \leq & \sigma(\Omega) \left(\int_{\Omega} |f(w)|^p \sigma(w) \mathop{d \mu(w)}\right) \left(\int_{\Omega} \sigma'(w) \mathop{d\mu(w)}\right)^{p/q}\\
& = & ||f||_{L^p_{\sigma}(\Omega)}^p \sigma(\Omega) \left(\sigma'(\Omega)\right)^{p/q} \\
& \lesssim & ||f||_{L^p_{\sigma}(\Omega)}^p
\end{eqnarray*}
which establishes the result. 

\end{proof}
\end{lemma}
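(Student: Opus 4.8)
The plan is to show that the ``missing'' part of $\mathcal{P}$—the piece of the kernel integral that $\mathcal{P}\lvert_N$ does not see—is a harmless bounded operator, precisely because it only ever pairs a point $z$ with a point $w$ at least one of which stays a fixed distance away from $b\Omega$. Concretely, given $f\in L^p_\sigma(\Omega)$ I would split it as $f=f_1+f_2$ with $f_1=f\chi_N$ and $f_2=f\chi_{\Omega\setminus N}$, and simultaneously split the outer region of integration in $\|\mathcal{P}f\|_{L^p_\sigma(\Omega)}$ into $N$ and $\Omega\setminus N$. After the triangle inequality this leaves four terms: $\|\mathcal{P}f_1\|_{L^p_\sigma(N)}$, which equals $\|\mathcal{P}\lvert_N f\|_{L^p_\sigma(\Omega)}$ and is $\lesssim \|f\|_{L^p_\sigma(\Omega)}$ by hypothesis, together with the three ``remainder'' terms $\|\mathcal{P}f_2\|_{L^p_\sigma(N)}$, $\|\mathcal{P}f_1\|_{L^p_\sigma(\Omega\setminus N)}$, and $\|\mathcal{P}f_2\|_{L^p_\sigma(\Omega\setminus N)}$. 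In each remainder term $K(z,w)$ is integrated over a set on which $z$ or $w$ lies in $\Omega\setminus N$; since $N$ is a relative neighborhood of $b\Omega$ (a union of sets $U_j\cap\Omega$ whose $U_j$ cover $b\Omega$), the set $\Omega\setminus N$ is a compact subset of $\Omega$ disjoint from $b\Omega$, so the pair $(z,w)$ ranges over a compact subset of $\overline{\Omega}\times\overline{\Omega}$ that avoids the boundary diagonal $\triangle(b\Omega\times b\Omega)$. By Kerzman's theorem $K$ is $C^\infty$, hence bounded, there, so $|K(z,w)|\lesssim 1$ on the relevant region.

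With that bound in hand each remainder term is estimated identically; take $\|\mathcal{P}f_2\|_{L^p_\sigma(N)}$ as the model. One has $|\mathcal{P}f_2(z)|\le \int_{\Omega\setminus N}|K(z,w)|\,|f(w)|\,d\mu(w)\lesssim \int_\Omega|f|\,d\mu$, a constant independent of $z$, whence $\|\mathcal{P}f_2\|_{L^p_\sigma(N)}^p\lesssim \sigma(\Omega)\bigl(\int_\Omega|f|\,d\mu\bigr)^p$. A single application of H\"older's inequality with exponents $p$ and $q=p/(p-1)$, writing $|f|=|f|\sigma^{1/p}\cdot\sigma^{-1/p}$ and using $\sigma^{-q/p}=\sigma^{-1/(p-1)}=\sigma'$, gives $\int_\Omega|f|\,d\mu\le \|f\|_{L^p_\sigma(\Omega)}\,\sigma'(\Omega)^{1/q}$. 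Hence $\|\mathcal{P}f_2\|_{L^p_\sigma(N)}^p\lesssim \sigma(\Omega)\,\sigma'(\Omega)^{p/q}\,\|f\|_{L^p_\sigma(\Omega)}^p\lesssim \|f\|_{L^p_\sigma(\Omega)}^p$, the last step using that $\sigma$ and $\sigma'$ are integrable on $\Omega$. The terms $\|\mathcal{P}f_1\|_{L^p_\sigma(\Omega\setminus N)}$ and $\|\mathcal{P}f_2\|_{L^p_\sigma(\Omega\setminus N)}$ are handled the same way (in the first the roles of the two variables are interchanged, but $z\in\Omega\setminus N$ again keeps $(z,w)$ off the diagonal). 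Summing the four estimates yields $\|\mathcal{P}f\|_{L^p_\sigma(\Omega)}\lesssim \|f\|_{L^p_\sigma(\Omega)}$.

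This argument is ``soft'' and I do not expect a genuine obstacle. The two points that require a little care are: (i) verifying that each remainder term really does avoid the boundary diagonal, which is exactly the observation that $\Omega\setminus N$ is compact and disjoint from $b\Omega$—this is what licenses invoking Kerzman's boundedness of the off-diagonal Bergman kernel; and (ii) correctly identifying $\sigma^{-q/p}$ with $\sigma'=\sigma^{-1/(p-1)}$ in the H\"older step, so that finiteness of $\int_\Omega\sigma'\,d\mu$ (equivalently $\sigma'(\Omega)<\infty$) may be used. Everything else is bookkeeping.
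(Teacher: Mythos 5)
Your proposal is correct and follows essentially the same approach as the paper's proof: the same decomposition $f=f_1+f_2$, the same four-term split of the norm, the same appeal to Kerzman's theorem to bound $K$ on compacta off the boundary diagonal, and the same H\"older step pairing $\sigma(\Omega)$ with $\sigma'(\Omega)^{p/q}$. The only addition is your explicit remark that $\Omega\setminus N$ is compact and bounded away from $b\Omega$, which makes the invocation of Kerzman's theorem slightly more transparent; the substance is identical.
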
 

This lemma shows that we can reduce to considering $N$ in place of $\Omega$ and $\mathcal{P}\lvert_{N}$ in place of $\mathcal{P}$. Therefore, going forward, we will abuse notation by writing $\Omega$ when we really mean the neighborhood $N$. 

In what follows, let $\mu$ denote Lebesgue area measure on $\Omega$. It is proven in \cite{Mc2} that the triple $(\Omega,d,\mu)$ constitutes a space of homogeneous type. Note that the measure $\mu$ is doubling on the non-isotropic balls essentially because of Proposition \ref{doubling poly}. Note if $d$ is not symmetric, we can symmetrize it by taking $d(z,w)+d(w,z)$ as an equivalent metric. We denote a ball in the quasi-metric $d$ of center $z_0$ and radius $r$ by $$B(z_0,r)=\{z \in \Omega: d(z,z_0)<r\}.$$ Since $\rho$ can be taken to be defined on $\mathbb{C}^n,$ this quasi-metric actually extends to $\overline{\Omega} \times \overline{\Omega}$ because a polydisc can be centered at $q \in b \Omega$ (see, for instance \cite{Mc6} for the convex case). Thus,  for $z \in \Omega$, define $d(z,b\Omega)$ as follows:
$$d(z,b\Omega):= \inf_{w \in b\Omega}d(z,w).$$
It is trivial to verify that for $z, z' \in \Omega$, 
$$d(z,b\Omega) \lesssim d(z',b\Omega)+d(z,z').$$
One can actually show that the distance to the boundary in this quasi-metric is comparable to the Euclidean distance. We have the following lemma.

\begin{lemma}\label{comparability}
Let $\operatorname{dist}(z,b\Omega)$ denote the Euclidean distance of $z$ to the boundary of $\Omega$. Then we have $$d(z,b\Omega) \approx \operatorname{dist}(z,b\Omega).$$

\begin{proof} 
We can assume that $z$ is sufficiently close to the boundary. Let $\pi(z)$ be the normal projection of $z$ to the boundary. Then $d(z,\pi(z)) \lesssim \operatorname{dist}(z,\pi(z))=\operatorname{dist}(z,b\Omega)$ by the structure of the quasi-metric (note that the first coordinate of the polydisc corresponds to the radial direction). This shows the bound $d(z,b\Omega) \lesssim \operatorname{dist}(z,b\Omega)$. 

For the other bound, we only need consider the distance of $z$ to points on the boundary in a local neighborhood $U$ where the local quasi-metric is defined (because otherwise the distances will reduce to Euclidean distance, see \cite{Mc2}). Let $\varepsilon= \operatorname{dist}(z,b\Omega).$ It is clear there is a universal constant $c>0$ so that the shrunken polydisc $P(z,c \varepsilon)$ is strictly contained in $\Omega$. This implies that $d(z,b\Omega) \gtrsim \operatorname{dist}(z,b\Omega)$, as desired.

\end{proof}
\end{lemma}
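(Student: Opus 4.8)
The plan is to establish the two inequalities $d(z,b\Omega)\lesssim\operatorname{dist}(z,b\Omega)$ and $d(z,b\Omega)\gtrsim\operatorname{dist}(z,b\Omega)$ separately, after disposing of the trivial regimes. If $\operatorname{dist}(z,b\Omega)$ is bounded below by a fixed constant then both quantities are comparable to constants, and for $z$ not lying near any of the boundary charts $U_j$ on which the local quasi-metric is defined the quasi-metric $d$ agrees with Euclidean distance up to constants by the construction in \cite{Mc2}; so we may fix a chart $U$ around some $p\in b\Omega$ and assume $z\in U\cap\Omega$ with $\varepsilon:=\operatorname{dist}(z,b\Omega)$ as small as we wish. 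Two ambient facts will be used repeatedly: since $\rho$ is a smooth defining function with non-vanishing gradient on $b\Omega$, one has $|\rho(w)|\approx\operatorname{dist}(w,b\Omega)$ for $w$ near the boundary; and Proposition \ref{polydiscs} supplies a constant $C_0$, independent of center and scale, with $|\rho(w)-\rho(q)|\le C_0\delta$ whenever $w\in P(q,\delta)\cap\Omega$.

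For the upper bound, let $\pi(z)\in b\Omega$ be the Euclidean normal projection of $z$, so $|z-\pi(z)|=\varepsilon$. In the holomorphic coordinates of Proposition \ref{polydiscs} centered at $z$ at scale comparable to $\varepsilon$, the first coordinate $z_1$ is up to a bounded factor the complex normal direction at $\pi(z)$ and $\tau_1(z,\delta)=\delta$; since the displacement $z-\pi(z)$ is purely normal of length $\varepsilon$, its first coordinate is $O(\varepsilon)$ and the remaining coordinates are negligible compared with the tangential extents $\tau_j(z,\delta)$, $j\ge2$, which dominate $\delta$ for small $\delta$. Hence $\pi(z)\in P(z,C_1\varepsilon)$ for a universal $C_1$, so $M(z,\pi(z))\le C_1\varepsilon$ and therefore $d(z,b\Omega)\le d(z,\pi(z))\lesssim\varepsilon=\operatorname{dist}(z,b\Omega)$.

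For the lower bound, we have $|\rho(z)|\ge c_0\varepsilon$ for a fixed $c_0>0$; put $c:=c_0/(2C_0)$. I claim the shrunken polydisc $P(z,c\varepsilon)$ is strictly contained in $\Omega$. Indeed if $w\in P(z,c\varepsilon)\cap\Omega$ then $|\rho(w)|\ge|\rho(z)|-C_0c\varepsilon\ge c_0\varepsilon-\tfrac{c_0}{2}\varepsilon>0$; were $P(z,c\varepsilon)$ to contain a boundary point, then, being open, it would contain points of $\Omega$ with $|\rho|$ arbitrarily small, contradicting this bound. Thus no $w\in b\Omega$ lies in $P(z,c\varepsilon)$, i.e. $M(z,w)\ge c\varepsilon$ for every boundary point $w$ near $U$, while the remaining boundary points are at Euclidean --- hence quasi-metric --- distance bounded below; so $d(z,b\Omega)\ge c\varepsilon=c\operatorname{dist}(z,b\Omega)$.

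The step I expect to demand the most care is the upper bound: one must verify that the radial coordinate $z_1$ faithfully records Euclidean displacement toward the boundary, so that translating a Euclidean distance $\varepsilon$ to $\pi(z)$ really does land inside $P(z,O(\varepsilon))$. This is precisely the transversality built into McNeal's construction --- $\partial\rho/\partial\operatorname{Re}z_1>c$ on $U\cap\Omega$ together with $\tau_1(z,\delta)=\delta$ --- combined with the elementary estimate $\tau_j(z,\delta)\gtrsim\delta$ in the tangential directions; once these are granted the rest is quantitative bookkeeping. A secondary, routine point is that $z$ and $\pi(z)$ lie in a common coordinate chart, which is automatic once $z$ is close enough to $b\Omega$.
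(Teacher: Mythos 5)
Your proposal is correct and follows essentially the same two-step argument as the paper: the upper bound via $d(z,\pi(z))\lesssim\varepsilon$ using the fact that the first polydisc coordinate is (complex) normal with $\tau_1(z,\delta)=\delta$, and the lower bound via a shrunken polydisc $P(z,c\varepsilon)$ strictly contained in $\Omega$. You simply fill in the quantitative details the paper leaves as ``clear'' --- in particular supplying the explicit choice $c=c_0/(2C_0)$ using the $\rho$-perturbation bound from Proposition~\ref{polydiscs}, and justifying $\pi(z)\in P(z,C_1\varepsilon)$ via $\tau_j(z,\delta)\gtrsim\delta$ for $j\ge2$ --- so the approach is the same, just more fully documented.
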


\noindent The following estimates for the Bergman kernel were obtained by McNeal (see \cite{Mc3,Mc1,Mc4,Mc5} and also \cite{NRSW} for a slightly different approach due to Nagel, Stein, Rosay, and Wainger):

\begin{theorem}\label{kernel estimates}
Let $\Omega$ be a simple domain and $K(z,w)$ denote the Bergman kernel for $\Omega$. Then near any $p \in b\Omega$, there exists a coordinate system centered at $z=(z_1,z_2,\dots,z_n)$ so that if $\alpha$, $\beta$ are multi-indices and $D^\alpha$, $D^\beta$ denote holomorphic derivatives taken in these coordinate directions, we have the following:

$$|D_z^\alpha D_w^\beta K(z,w)| \leq C_{\alpha,\beta} \delta^{-(2+\alpha_1+\beta_1)} \prod_{k=2}^{n} \tau_k(z,\delta)^{-(2+\alpha_k+\beta_k)}$$

\noindent where $\delta= |\rho(z)|+|\rho(w)|+M(z,w)$.

\end{theorem}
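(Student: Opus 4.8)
The plan is to follow the scaling method: reduce the full estimate to the size estimate (the case $\alpha=\beta=0$), handle the near-diagonal part by Cauchy--Schwarz together with an elementary diagonal bound, and attack the genuinely hard far-off-diagonal part by rescaling to model domains. Fix a boundary point $p$, keep the polydiscs $P(q,\delta)$, the quasi-metric $M$, and Propositions \ref{polydiscs} and \ref{doubling poly} at hand, and write $\delta=\delta(z,w):=|\rho(z)|+|\rho(w)|+M(z,w)$; since $\mu(P(z,\delta))\approx\delta^2\prod_{k=2}^n\tau_k(z,\delta)^2$ and $\tau_1\equiv\delta$, the case $\alpha=\beta=0$ is exactly $|K(z,w)|\lesssim\mu(P(z,\delta))^{-1}$. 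Granting this, the derivative bounds follow by Cauchy's inequality: $K$ is holomorphic in $z$ and antiholomorphic in $w$, so differentiating on a polydisc about $(z,w)$ whose polyradii are a small fixed multiple of $(\tau_1(z,\delta),\dots,\tau_n(z,\delta))$ in the $z$-variables and of $(\tau_1(w,\delta),\dots,\tau_n(w,\delta))$ in the $w$-variables gains precisely $\prod_k\tau_k(z,\delta)^{-(\alpha_k+\beta_k)}$. One only needs that $\delta$ and each $\tau_k(\cdot,\delta)$ stay comparable to their values at $(z,w)$ on this polydisc, which is the content of the quasi-triangle inequality for $M$, the doubling of Proposition \ref{doubling poly}, and the relation $\tau_k(z,\delta)\approx\tau_k(w,\delta)$ valid when $M(z,w)\lesssim\delta$. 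So it suffices to prove $|K(z,w)|\lesssim\mu(P(z,\delta))^{-1}$.

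Two regimes arise. In the near-diagonal regime, where $M(z,w)$ is small compared with $\max(|\rho(z)|,|\rho(w)|)$, the fact that $|\rho(z)|-|\rho(w)|$ has absolute value $\lesssim M(z,w)$ forces $|\rho(z)|\approx|\rho(w)|\approx\delta$, and Proposition \ref{doubling poly} gives $P(z,\delta)\approx P(w,\delta)$; then $|K(z,w)|\le K(z,z)^{1/2}K(w,w)^{1/2}$ by the reproducing property and Cauchy--Schwarz, and the diagonal upper bound $K(q,q)\lesssim\mu(P(q,|\rho(q)|))^{-1}$ — which is immediate from the sub-mean-value inequality for $|f|^2$ over the polydisc $P(q,c|\rho(q)|)\subset\Omega$ (Lemma \ref{comparability}), applied to a normalized $f\in A^2(\Omega)$ and then taking the supremum over $f$ — closes this case. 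The remaining, genuinely hard, regime is $M(z,w)\gtrsim\max(|\rho(z)|,|\rho(w)|)$, so that $\delta\approx M(z,w)$ and the bound $K(z,z)^{1/2}K(w,w)^{1/2}$ is much larger than $\mu(P(z,\delta))^{-1}$: here one must extract the decay encoded in the large parameter $M(z,w)$, which cannot come from soft arguments alone.

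For this I would run the scaling argument. Localize to a boundary neighborhood, dilate by the (possibly $\delta$-dependent) holomorphic map $\Lambda=\Lambda_{z,\delta}$ that sends $P(z,\delta)$ to the unit polydisc, and transfer via the Bergman transformation law
$$K_\Omega(z,w)=\det\Lambda'(z)\,K_{\Lambda(\Omega)}(\Lambda z,\Lambda w)\,\overline{\det\Lambda'(w)},\qquad |\det\Lambda'(z)|^{2}\approx\mu(P(z,\delta))^{-1},$$
so that the claim reduces to $|K_{\Lambda(\Omega)}(\Lambda z,\Lambda w)|\lesssim 1$ with a constant independent of $z,w,\delta$. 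The rescaled domains $\Lambda(\Omega)$ sit between fixed polydiscs and, because $\Omega$ is of finite type, have defining functions whose coefficients are bounded — uniformly in $z$ and $\delta$ — up to the relevant order; each such domain is uniformly close to an explicit polynomial model (a Siegel domain in the strongly pseudoconvex case, $\{\operatorname{Im} z_1>P(z_2)\}$ with $P$ subharmonic in the $\mathbb{C}^2$ case, a convex polynomial model in the convex case, a sum model in the decoupled case). On the models one estimates the Bergman kernel directly by taking the Fourier--Laplace transform in the $\operatorname{Re} z_1$ variable and reducing to weighted Fock-type kernels whose size is governed exactly by the quantities $\tau_k$; the passage from the model to the actual rescaled domain is carried out by $\bar\partial$-estimates (Hörmander's weighted $L^2$ estimate, or Kohn's subelliptic estimate) with constants uniform over the family, subellipticity being available precisely because the domain is of finite type. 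Transferring back yields $|K(z,w)|\lesssim\mu(P(z,\delta))^{-1}$.

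The main obstacle is the uniformity that the previous paragraph quietly demands: that the dilations $\Lambda_{z,\delta}$ produce domains of uniformly bounded \emph{and} uniformly non-degenerate geometry across all boundary points, all admissible scales $\delta$, and all four classes of simple domains, so that the model comparison and the $\bar\partial$-estimates hold with constants independent of $z$ and $\delta$ — equivalently, that one can produce a plurisubharmonic weight adapted to each $P(q,\delta)$ with complex-Hessian bounds independent of $q$ and $\delta$. This is exactly McNeal's theorem, and finite type is what makes it work: it is precisely the hypothesis that prevents the polydiscs $P(q,\delta)$ from degenerating and the rescaled domains from becoming infinitely flat as $q\to b\Omega$. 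Everything else — the Cauchy estimates, the sub-mean-value and Cauchy--Schwarz inequalities, and the combinatorics of the polydisc covering — is soft given the machinery already quoted.
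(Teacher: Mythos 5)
The paper does not prove Theorem~\ref{kernel estimates} at all: it is quoted verbatim as a result of McNeal, with the reader pointed to \cite{Mc3}, \cite{Mc1}, \cite{Mc4}, \cite{Mc5} (and \cite{NRSW}) for proofs. So there is no ``paper's own proof'' to compare against; the relevant comparison is between your sketch and McNeal's actual arguments.

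Read as a summary of McNeal's strategy, your outline has the right shape: reduce to the $\alpha=\beta=0$ size estimate, dispose of the near-diagonal regime with the reproducing formula plus Cauchy--Schwarz and the sub-mean-value bound $K(q,q)\lesssim\mu(P(q,|\rho(q)|))^{-1}$, and handle the far-off-diagonal regime by anisotropic dilation, the Bergman transformation law, comparison with polynomial models, and uniform $\bar\partial$-estimates. Two points deserve more care than you give them. First, the reduction from the derivative estimate to the size estimate via Cauchy's inequality is not quite as soft as claimed, because the coordinate system $(z_1,\dots,z_n)$ and the radii $\tau_k(z,\delta)$ depend on $z$ and on the scale $\delta$; differentiating in $z$ therefore perturbs the very coordinates in which $D^\alpha$ is taken, and one must check that the estimate is stable under such perturbations (this is handled in \cite{Mc5} by proving that $\tau_k(q,\delta)\approx\tau_k(q',\delta')$ when $q'$ and $\delta'$ are comparably close, and by comparing the competing coordinate frames). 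You quote the needed comparability lemma but do not address the frame change. Second, and more importantly, you openly defer the crux — the uniform-in-$(q,\delta)$ non-degeneracy of the rescaled domains, the existence of plurisubharmonic weights with uniform Hessian bounds, and the uniform subelliptic $\bar\partial$-estimates — with the sentence ``this is exactly McNeal's theorem.'' That is an honest flag, but it means the proposal is a road map rather than a proof: the entire analytic content of Theorem~\ref{kernel estimates} lives in the uniformity you set aside, and it is precisely what varies (nontrivially) among the four classes of simple domains. If this were intended as an independent proof it would be circular at that step; as a reading guide to \cite{Mc3,Mc1,Mc4,Mc5} it is accurate.
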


 By using the global quasi-metric $d$, one can obtain global estimates on the Bergman kernel. The following was proven in \cite{Mc2}:
 
\begin{theorem}\label{size and smoothness}
Let $\Omega$ be a simple domain. Then the following hold:
\begin{enumerate}
\item \emph{(Size)} There exists a constant $C_1$ so that for all $z,w \in \Omega$:
$$|K(z,w)|\leq \frac{C_1}{\mu(B(z,d(z,w)))}.$$

\item \emph{(Smoothness)} 
There exists a constant $C_2$ and $\nu>0$ so that we have, provided $d(z,w) \geq C_2 d(z,z')$:
$$|K(z,w)-K(z',w)| \leq C_1 \left(\frac{d(z,z')}{d(z,w)}\right)^{\nu} \frac{1}{\mu(B(z,d(z,w)))}.$$

\end{enumerate}
\end{theorem}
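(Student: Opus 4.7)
The plan is to reduce both estimates to the local coordinate bounds of Theorem \ref{kernel estimates} and then convert them into the stated global form using the comparability between polydiscs and quasi-metric balls. Away from the boundary diagonal $\triangle(b\Omega \times b\Omega)$ Kerzman's theorem makes $K$ bounded and smooth, so one can assume $z$ and $w$ both lie in a common local neighborhood $U$ where McNeal's coordinates and polydiscs are defined. Writing $\delta := |\rho(z)| + |\rho(w)| + M(z,w)$, Lemma \ref{comparability} together with the definitions of $d$ and $M$ (and the fact that $|\rho(z)| \approx \operatorname{dist}(z,b\Omega)$) yields $\delta \approx d(z,w)$. Since $\mu(P(z,\delta)) \approx \delta^{2}\prod_{k=2}^{n}\tau_{k}(z,\delta)^{2}$, the right-hand side of Theorem \ref{kernel estimates} with $\alpha=\beta=0$ is comparable to $\mu(P(z,\delta))^{-1}$, and the non-isotropic ball $B(z,d(z,w))$ is comparable to $P(z,\delta)$. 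This gives part (1).

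For part (2), I would apply the fundamental theorem of calculus along the straight segment from $z'$ to $z$ in the coordinates centered at $z$. Because $d(z,w)\geq C_{2}d(z,z')$ for a sufficiently large $C_{2}$, the point $z'$ lies in $P(z,d(z,z'))$, which in turn lies in a small dilate of $P(z,\delta)$; Proposition \ref{doubling poly} then guarantees that the whole segment remains in a bounded dilate of $P(z,\delta)$, on which $\mu$ and each $\tau_{k}$ are comparable to their values at $(z,\delta)$. Theorem \ref{kernel estimates} with a single holomorphic derivative in direction $k$ yields $|\partial_{z_{k}}K(\zeta,w)| \lesssim \tau_{k}(z,\delta)^{-1}\mu(B(z,d(z,w)))^{-1}$ for every $\zeta$ on the segment, and by the definition of $M$ the coordinate displacement satisfies $|z_{k}-z_{k}'|\leq \tau_{k}(z,d(z,z'))$. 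Summing,
$$|K(z,w)-K(z',w)| \lesssim \sum_{k=1}^{n}\frac{\tau_{k}(z,d(z,z'))}{\tau_{k}(z,\delta)}\cdot\frac{1}{\mu(B(z,d(z,w)))}.$$

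The main obstacle is the final step: establishing a uniform power-law bound $\tau_{k}(z,r)/\tau_{k}(z,R)\lesssim (r/R)^{\nu}$ for $r\leq R$ with some $\nu>0$ independent of $z$. This is where the finite-type hypothesis is genuinely used, since for a general pseudoconvex domain one can have only logarithmic decay. For each of the four classes of simple domains McNeal's construction gives $\tau_{k}(z,\delta)\approx \delta^{1/m_{k}(z)}$ with $m_{k}(z)$ bounded by the D'Angelo type $m$, so one can take $\nu = 1/m$; I would quote this directly from \cite{Mc3,Mc1,Mc4,Mc5}. Combining this with the displayed inequality above, and patching the local neighborhoods $U_{j}$ to pass to the global metric $d$ (using the off-diagonal boundedness from Kerzman's theorem to handle the remainder), yields the desired Hölder-type smoothness and completes the proof.
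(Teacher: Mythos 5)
The paper offers no proof of this theorem: it is quoted directly from McNeal's paper \cite{Mc2}, so there is no internal argument against which to compare. Your proposal is a plausible reconstruction of the line McNeal actually takes (reduce to the polydisc--coordinate estimate of Theorem~\ref{kernel estimates}, identify its right-hand side with $\mu(P(z,\delta))^{-1}$, and use a fundamental-theorem-of-calculus argument for the smoothness bound), but it contains one error and two spots that need tightening.

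The error is the claim $\delta=|\rho(z)|+|\rho(w)|+M(z,w)\approx d(z,w)$. This is false: if $z,w$ both sit at unit Euclidean distance from $b\Omega$ and $M(z,w)\to 0$, then $\delta$ stays bounded below while $d(z,w)\to 0$. What actually holds, and what both parts need, is only the one-sided bound $\delta\gtrsim M(z,w)\approx d(z,w)$. Monotonicity of the $\tau_k$ in the second argument then gives $\mu(P(z,\delta))\gtrsim\mu(P(z,d(z,w)))\approx\mu(B(z,d(z,w)))$, which suffices for part (1), and in part (2) the factor $(d(z,z')/\delta)^{\nu}\leq C(d(z,z')/d(z,w))^{\nu}$ survives for the same reason. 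The two smaller points: first, the assertion $\tau_k(z,\delta)\approx\delta^{1/m_k(z)}$ is not literally true for general simple domains (in the convex finite type case $\tau_k$ need not be a pure power); what McNeal proves and what you should invoke is the two-sided power-law doubling $\tau_k(z,r)/\tau_k(z,R)\lesssim (r/R)^{1/m}$ for $r\leq R$, with $m$ determined by the type, which indeed gives $\nu=1/m$. Second, in the FTC step the derivative bound in Theorem~\ref{kernel estimates} is expressed at the evaluation point in coordinates centered there, so you must also check that $|\rho(\zeta)|+|\rho(w)|+M(\zeta,w)\approx\delta$ and $\tau_k(\zeta,\cdot)\approx\tau_k(z,\cdot)$ uniformly along the segment; under the hypothesis $d(z,w)\geq C_2 d(z,z')$ this follows from Proposition~\ref{doubling poly} but should be stated. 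With those corrections the sketch matches the standard argument.
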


We actually get another size estimate for free, which will help us in the course of the proof. This lemma can actually be deduced directly from Theorem \ref{kernel estimates}, but we provide another proof here (which actually shows any domain, not necessarily simple, whose Bergman kernel satisfies the estimates in Theorem \ref{size and smoothness} will necessarily satisfy an additional estimate).

\begin{lemma} Suppose $K(z,w)$ is the Bergman kernel for $\Omega$ and $K$ satisfies the size estimate above. Then there exists a constant $C_3$ so uniformly for all $z, w \in \Omega$  
$$|K(z,w)|\leq C_3 \min \left\{ \frac{1}{\mu(B(z,d(z,b\Omega)))}, \frac{1}{\mu(B(w,d(w,b\Omega)))} \right\}.$$
\begin{proof} Fix $z \in \Omega$. We first claim that given $\varepsilon>0$, there exists a $w' \in \Omega$ so $|K(z,w)|\leq |K(z,w')|$  and $\operatorname{dist}(w',b\Omega)\leq \varepsilon$. The claim follows immediately by applying the  Maximum Principle to the closed domain $\Omega_{\varepsilon}=\{w \in \Omega:|\rho(w)| \geq \varepsilon\}$ and function $K(w,z)=\overline{K(z,w)}$, which is analytic in $w$. 

 Now choose $w' \in b\Omega_\varepsilon$ satisfying the above conditions. Then we have, using Lemma \ref{comparability}: $$d(z,b\Omega) \leq cd(z,w')+cd(w',b\Omega)\leq c'd(z,w')+c'\varepsilon$$
\noindent so we obtain the estimate

$$d(z,w') \geq \frac{1}{c'}d(z,b\Omega)-\varepsilon.$$

Thus, applying the known size estimate, we get 

$$|K(z,w)| \leq |K(z,w')| \leq  \frac{C_2}{\mu(B(z,d(z,w')))}\leq \frac{C_2}{\mu(B(z,\frac{1}{c'}d(z,b\Omega)-\varepsilon))}.$$

Since the inequality above holds for all $\varepsilon>0$ and $\mu$ is doubling on quasi-balls, we obtain

$$|K(z,w)|\leq \frac{C_3}{\mu(B(z,d(z,b\Omega)))}$$

\noindent as desired. Note $C_3$ is independent of $z$. The other inequality follows by symmetry.

\end{proof}
\end{lemma}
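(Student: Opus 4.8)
The plan is to fix $z\in\Omega$ and prove the one-sided bound $|K(z,w)|\le C_3/\mu(B(z,d(z,b\Omega)))$ with a constant independent of $z,w$; the other bound, and hence the minimum, follows by interchanging the roles of $z$ and $w$ (using $K(z,w)=\overline{K(w,z)}$ and the fact that the size estimate is not symmetric but yields an equivalent quantity). The idea is that a holomorphic function cannot be largest deep inside the domain, so it suffices to control $K(z,\cdot)$ near the boundary, where the size estimate of Theorem \ref{size and smoothness} is most favorable.

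First I would invoke holomorphicity: for fixed $z$, the map $w\mapsto\overline{K(z,w)}=K(w,z)$ is holomorphic on $\Omega$, and since it lies in $L^2(\Omega)$ it is continuous, hence bounded, on each compact sublevel set $\Omega_\varepsilon:=\{w\in\Omega:|\rho(w)|\ge\varepsilon\}$ for small $\varepsilon>0$. By the maximum modulus principle its modulus on $\Omega_\varepsilon$ is attained on $b\Omega_\varepsilon=\{|\rho|=\varepsilon\}$. Therefore, for any $w$ and any $\varepsilon$ with $0<\varepsilon<|\rho(w)|$, there is $w'\in b\Omega_\varepsilon$ with $|K(z,w)|\le|K(z,w')|$ and $\operatorname{dist}(w',b\Omega)\lesssim\varepsilon$. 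Next I would estimate $d(z,w')$ from below: by the quasi-triangle inequality and Lemma \ref{comparability}, $d(z,b\Omega)\lesssim d(z,w')+d(w',b\Omega)\lesssim d(z,w')+\varepsilon$, so $d(z,w')\gtrsim d(z,b\Omega)-C\varepsilon$. Plugging $w'$ into the size estimate of Theorem \ref{size and smoothness} gives $|K(z,w)|\le|K(z,w')|\le C_1/\mu\big(B(z,d(z,w'))\big)\le C_1/\mu\big(B(z,c\,d(z,b\Omega)-C\varepsilon)\big)$. Letting $\varepsilon\to0$ and using that $\mu$ is doubling on quasi-balls to replace $B(z,c\,d(z,b\Omega))$ by $B(z,d(z,b\Omega))$ at the cost of a fixed constant yields the claim.

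The step requiring the most care is the application of the maximum modulus principle: one must be sure that $K(\cdot,z)$ is genuinely bounded and continuous on the \emph{compact} set $\overline{\Omega_\varepsilon}\subset\Omega$ (true since holomorphic $L^2$ functions are smooth in the interior) and that its boundary as a compact set is precisely $b\Omega_\varepsilon$; the subsequent limit $\varepsilon\to0$ is harmless once the doubling property is available. An alternative, purely computational route is to read the bound directly off Theorem \ref{kernel estimates}: there $\delta=|\rho(z)|+|\rho(w)|+M(z,w)\gtrsim|\rho(z)|\approx d(z,b\Omega)$, so the right-hand side is $\lesssim\mu(B(z,\delta))^{-1}\le\mu(B(z,c\,d(z,b\Omega)))^{-1}$ by monotonicity and doubling. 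I would still prefer the maximum-principle argument, since it uses only the size estimate of Theorem \ref{size and smoothness} and hence applies to any domain whose Bergman kernel satisfies that estimate, not merely simple ones.
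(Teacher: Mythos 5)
Your proposal is correct and follows essentially the same route as the paper's proof: you apply the maximum modulus principle on $\Omega_\varepsilon = \{|\rho| \ge \varepsilon\}$ to push $w$ to a nearby-boundary point $w'$, bound $d(z,w')$ from below via the quasi-triangle inequality and Lemma~\ref{comparability}, invoke the size estimate at $w'$, and let $\varepsilon \to 0$ using doubling. The alternative you mention (reading the bound off Theorem~\ref{kernel estimates} via $\delta \gtrsim |\rho(z)|$) is exactly the one the paper flags in the remark preceding the lemma, and your added care about compactness of $\Omega_\varepsilon$ and the symmetry of the size estimate under $K(z,w)=\overline{K(w,z)}$ is sound but does not change the argument.
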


\begin{remark} As a clear example of this property, consider the unit ball $\mathbb{B}_n$ where the Bergman kernel is given by $K(z,w)= \frac{1}{(1-\langle z,w\rangle)^{n+1}}$. Then $|K(z,w)| \lesssim \frac{1}{(1-|z|)^{n+1}}$. \end{remark}

It is a well-known fact in harmonic analysis (for example, see \cite{Duong})  that if $B(z,r)$ is a ball of radius $r$, center $z$ in a space of homogeneous type, then there exists uniform constants $c_0$, $m$ so that if $\lambda\geq 1$, we have
$$\mu(B(z,\lambda r)) \leq c_0 \lambda^m \mu(B(z,r)).$$ 

\noindent Here the parameter $m$ can be thought of as roughly corresponding to the ``dimension" of the space. We will use this fact, referred to as the \emph{strong homogeneity property}, in a crucial point in the proof of the main theorem. 

To continue with the analysis, we need to define an appropriate maximal function with respect to the quasi-metric. In analogy with B\'{e}koll\`{e}'s result, we will also only consider balls that touch the boundary of $\Omega$. We make the following definition:
\begin{definition}\label{maximal}
For  $z \in \Omega$ and $f \in L^{1}(\Omega)$, define the following maximal function:
$$\mathcal{M}f(z):=\sup_{B(w,R)\ni z; R>d(w,b\Omega)}\frac{1}{\mu(B(w,R))} \int_{B(w,R)}|f| \mathop{d\mu}.$$
\end{definition}

Proving Theorem \ref{main} can be broken down into the task of proving the following two results (mimicking the approach taken by B\'{e}koll\`{e} in \cite{Bekolle}):
\begin{theorem}\label{maximalth} Let $1<p<\infty$ and suppose $\sigma \in \mathcal{B}_p$. Then $||\mathcal{M}f||_{L^p_\sigma(\Omega)} \lesssim ||f||_{L^p_\sigma(\Omega)}$.
\end{theorem}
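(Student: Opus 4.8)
The plan is to prove the weighted boundedness of the maximal function $\mathcal{M}$ by following the classical B\'{e}koll\`{e} strategy, which itself mirrors the unweighted Calder\'{o}n--Zygmund covering argument adapted to the space of homogeneous type $(\Omega, d, \mu)$ with the additional restriction that only boundary-touching balls (those with $R > d(w, b\Omega)$) enter the supremum. Since $\sigma \in \mathcal{B}_p$, the natural approach is to first establish a weighted weak-type $(1,1)$-style estimate with respect to the pair $(\sigma\,d\mu, \sigma\,d\mu)$, or more precisely to prove that $\mathcal{M}$ maps $L^p_\sigma \to L^p_\sigma$ directly via a good-$\lambda$ or Marcinkiewicz-type interpolation after first handling the endpoint. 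Concretely, I would show that $\mathcal{M}$ is of weak-type $(p,p)$ with respect to $\sigma\,d\mu$ when $f \in L^p_\sigma$, using a Vitali-type covering lemma valid in spaces of homogeneous type, and combine this with the trivial $L^\infty$ bound and interpolation.

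The key steps, in order, are as follows. First, fix $\lambda > 0$ and consider the level set $E_\lambda = \{z \in \Omega : \mathcal{M}f(z) > \lambda\}$. For each $z \in E_\lambda$ select a boundary-touching ball $B(w_z, R_z) \ni z$ with $R_z > d(w_z, b\Omega)$ realizing (up to a factor) the supremum, so that $\int_{B(w_z,R_z)} |f|\,d\mu > \lambda \mu(B(w_z, R_z))$. Second, apply the Vitali covering lemma for spaces of homogeneous type to extract a countable disjoint subfamily $\{B(w_j, R_j)\}$ such that the dilates $\{K B(w_j, R_j)\}$ cover $E_\lambda$, where $K$ depends only on the quasi-metric constant. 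Here one must check that the dilated balls are still boundary-touching, which follows from the inequality $d(z, b\Omega) \lesssim d(z', b\Omega) + d(z, z')$ noted in the excerpt: enlarging the radius only makes the boundary-touching condition easier to satisfy. Third, invoke the $\mathcal{B}_p$ condition together with H\"{o}lder's inequality to convert the Lebesgue measure of each ball into its $\sigma$-measure. The standard computation: for a ball $B$ with $\int_B |f| > \lambda \mu(B)$, write $\int_B |f| = \int_B |f| \sigma^{1/p} \sigma^{-1/p}$ and apply H\"{o}lder with exponents $p$ and $q$ to obtain $\lambda \mu(B) \le \left(\int_B |f|^p \sigma\right)^{1/p} \left(\int_B \sigma^{-q/p}\right)^{1/q} = \left(\int_B |f|^p \sigma\right)^{1/p} \left(\int_B \sigma'\right)^{1/q}$; then $\mathcal{B}_p$ gives $\left(\int_B \sigma'\right)^{1/q} \lesssim [\sigma]_{\mathcal{B}_p}^{1/p} \mu(B) \sigma(B)^{-1/p}$, so that $\lambda^p \sigma(B) \lesssim [\sigma]_{\mathcal{B}_p} \int_B |f|^p \sigma\,d\mu$. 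Fourth, sum over the disjoint family and use the strong homogeneity property together with a doubling estimate for $\sigma$ on boundary-touching balls (which itself follows from $\mathcal{B}_p$ and doubling of $\mu$) to control $\sigma(K B_j)$ by $\sigma(B_j)$, yielding $\lambda^p \sigma(E_\lambda) \lesssim [\sigma]_{\mathcal{B}_p} \|f\|_{L^p_\sigma}^p$. Fifth, since $\mathcal{M}f \le \|f\|_\infty$ pointwise, $\mathcal{M}$ is trivially bounded on $L^\infty$; interpolating the weak-type $(p_0, p_0)$ estimate (for $p_0$ slightly below $p$, obtained by the same argument applied with exponent $p_0$, noting $\sigma \in \mathcal{B}_{p_0}$ follows from a self-improvement or by directly checking the argument works) with the $L^\infty$ bound via the Marcinkiewicz interpolation theorem for the measure $\sigma\,d\mu$ gives the strong-type $L^p_\sigma$ bound. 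Alternatively, and more cleanly, one can run a good-$\lambda$ argument directly, or observe that the weak-type bound just proven, being valid at the exponent $p$ itself with the disjointification absorbing the loss, can be upgraded by the standard trick of applying it to $f \chi_{\{|f| > \lambda/2\}}$.

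The main obstacle I anticipate is twofold. The first subtlety is ensuring that the Vitali-selected dilated balls remain \emph{boundary-touching}, since the restricted supremum in the definition of $\mathcal{M}$ only sees such balls; one needs the enlargement to preserve $R > d(w, b\Omega)$, and one also needs the doubling and $\mathcal{B}_p$ estimates to hold uniformly over this restricted class rather than all balls — this is exactly the phenomenon in B\'{e}koll\`{e}'s original setting and should go through because the boundary-touching condition is stable under enlargement. The second, more delicate point is that to obtain the \emph{strong}-type inequality rather than merely weak-type, one needs either an interpolation argument requiring the estimate at a nearby exponent (hence requiring $\sigma \in \mathcal{B}_{p_0}$ for some $p_0 < p$, which is \emph{not} automatic and would need a reverse-H\"{o}lder / self-improvement property of $\mathcal{B}_p$ weights — itself a nontrivial lemma in this generalized setting) or else a direct argument. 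I would therefore either prove a reverse-H\"{o}lder inequality for $\mathcal{B}_p$ weights on $(\Omega, d, \mu)$ — adapting the classical proof, which uses only the doubling property and a Calder\'{o}n--Zygmund decomposition, both available here — or, to avoid this, run the good-$\lambda$ inequality between $\mathcal{M}f$ and a suitable maximal function to extract the strong bound directly. Given that the paper's stated philosophy is to follow B\'{e}koll\`{e}, I expect the intended route is the reverse-H\"{o}lder / self-improvement plus Marcinkiewicz interpolation, with the reverse-H\"{o}lder lemma being the genuinely substantive ingredient.
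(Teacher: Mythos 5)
Your proposal reaches the correct statement, but it departs substantially from the paper's strategy, and the point at which it departs is exactly where a genuine gap appears.

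The paper does not prove a weak-type estimate and interpolate. Instead it introduces a \emph{regularizing operator} $R_k(g)(z)=\frac{1}{\mu(B_k(z))}\int_{B_k(z)}|g|\,d\mu$, where $B_k(z)=\{w:d(w,z)<k\,d(z,b\Omega)\}$ is a small interior ball, and shows (via Lemmas~\ref{inside}, \ref{switch}, \ref{outside}, and Proposition~\ref{doubling}) that
\[
\int_\Omega(\mathcal{M}f)^p\sigma\,d\mu \ \lesssim\ \int_\Omega\bigl(\mathcal{M}(R_k|f|)\bigr)^p\,R_k(\sigma)\,d\mu .
\]
The crucial observation is then that $R_k(\sigma)$ is an honest Muckenhoupt $A_p$ weight on the full space of homogeneous type $(\Omega,d,\mu)$ --- i.e.\ its $A_p$ constant is controlled over \emph{all} balls, not just boundary-touching ones --- so the classical weighted Hardy--Littlewood maximal theorem applies directly, and a final application of H\"older, the $\mathcal{B}_p$ condition on inflated boundary-touching balls, and Lemma~\ref{switch} returns one to $\int|f|^p\sigma\,d\mu$. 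No weak-type estimate, no interpolation, and no self-improvement of $\mathcal{B}_p$ are needed.

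Your route runs into trouble precisely at the step you flag as the ``genuinely substantive ingredient.'' Having derived a weak-type $(p,p)$ bound from the Vitali covering plus $\mathcal{B}_p$, you cannot interpolate with $L^\infty$ to get strong $(p,p)$: Marcinkiewicz needs a weak estimate at some $p_0<p$, hence $\sigma\in\mathcal{B}_{p_0}$ for some $p_0<p$. You propose to establish this by adapting the classical reverse-H\"older proof, asserting it ``uses only the doubling property and a Calder\'on--Zygmund decomposition, both available here.'' This is not available. A $\mathcal{B}_p$ weight need not be doubling on interior balls; the $\mathcal{B}_p$ condition gives you no control on averages of $\sigma$ or $\sigma'$ over balls that do not touch $b\Omega$, and the Calder\'on--Zygmund stopping-time used in the reverse-H\"older argument inevitably produces such interior balls. (This is the well-known pathology that distinguishes B\'ekoll\`e--Bonami weights from $A_\infty$, and it is exactly why B\'ekoll\`e's original paper used the $R_k$-regularization instead of self-improvement.) Your fallback --- applying the weak-type $(p,p)$ bound to $f\chi_{\{|f|>\lambda/2\}}$ and integrating in $\lambda$ --- also fails: that classical trick upgrades weak $(1,1)$, or more generally weak $(p_0,p_0)$ with $p_0<p$, to strong $(p,p)$; run at the exponent $p$ itself the resulting $\int_0^{2|f|}\lambda^{-1}\,d\lambda$ diverges. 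So as written the proposal does not close, and the regularization step is the missing idea that makes the paper's argument work.
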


\begin{theorem}\label{positive operator} Let $\mathcal{P}^+$ be the positive operator defined $\mathcal{P}^+f(z)=\int_\Omega |K(z,w)| f(w) \mathop{d \mu(w)}$. Let $1<p<\infty$ and $\sigma \in \mathcal{B}_p$. Then $||\mathcal{P}^+f||_{L^p_\sigma(\Omega)} \lesssim ||\mathcal{M}f||_{L^p_\sigma(\Omega)}$.
\end{theorem}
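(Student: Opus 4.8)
The plan is to prove Theorem \ref{positive operator} by the classical Bekoll\`e approach: dominate the positive operator $\mathcal{P}^+f(z)$ pointwise, up to the $\mathcal{B}_p$-constant and the weight, by something that is controlled by $\mathcal{M}(f)(z)$, and then integrate. The key structural facts I would exploit are the size estimate $|K(z,w)| \leq C_1/\mu(B(z,d(z,w)))$ from Theorem \ref{size and smoothness}, the improved size estimate $|K(z,w)| \leq C_3/\mu(B(z,d(z,b\Omega)))$ from the Lemma following it, the strong homogeneity property $\mu(B(z,\lambda r)) \leq c_0 \lambda^m \mu(B(z,r))$, and the fact that $(\Omega,d,\mu)$ is a space of homogeneous type.

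First I would fix $z \in \Omega$ and decompose the integral defining $\mathcal{P}^+f(z)$ into dyadic annuli adapted to $\delta := d(z,b\Omega)$. Set $A_0 = B(z, 2\delta)$ and $A_j = B(z, 2^{j+1}\delta) \setminus B(z, 2^j\delta)$ for $j \geq 1$; since $\Omega$ (really $N$) is bounded in the quasi-metric, only finitely many annuli are nonempty, but more importantly the point is that every ball $B(z, 2^{j+1}\delta)$ with $j \geq 0$ has radius exceeding $d(z,b\Omega)$, hence is an admissible ball for the maximal function $\mathcal{M}$. On $A_j$ with $j \geq 1$, one has $d(z,w) \approx 2^j\delta$, so by the size estimate $|K(z,w)| \lesssim 1/\mu(B(z, 2^j\delta))$; on $A_0$ one uses instead the improved estimate $|K(z,w)| \lesssim 1/\mu(B(z,\delta)) \approx 1/\mu(B(z,2\delta))$, which is exactly why that lemma was proved ``for free'' earlier — near the diagonal the naive size bound $1/\mu(B(z,d(z,w)))$ blows up but the kernel does not. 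Therefore
\begin{equation*}
\mathcal{P}^+f(z) \lesssim \sum_{j \geq 0} \frac{1}{\mu(B(z, 2^{j+1}\delta))} \int_{B(z, 2^{j+1}\delta)} |f|\, d\mu \lesssim \Big( \sum_{j\geq 0} 1 \Big)\, \mathcal{M}f(z),
\end{equation*}
but the bare sum $\sum_{j} 1$ diverges, so this crude bound is not enough and one must be more careful.

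The remedy — and the main obstacle — is to gain geometric decay in $j$ from the weighted norm rather than from the kernel alone, i.e. to run the argument already inside the $L^p_\sigma$ integral in the spirit of Bekoll\`e's original proof. Concretely I would not estimate $\mathcal{P}^+ f(z)$ pointwise by $\mathcal{M}f(z)$ directly, but rather test against a function $g \in L^{p'}_{\sigma}$ with $\|g\|_{L^{p'}_\sigma}\le 1$, write $\langle \mathcal{P}^+ f, g\rangle_{\mu}$, expand using the annular decomposition in \emph{both} variables, and use the $\mathcal{B}_p$ condition on the admissible balls $B(z,2^{j}\delta)$ to produce, for each $j$, a factor controlled by $[\sigma]_{\mathcal{B}_p}$ times the averages of $f$ and $g$ over that ball, with a genuine summable gain $2^{-\epsilon j}$ coming from the smoothness exponent $\nu$ in Theorem \ref{size and smoothness} (a standard Schur-type / almost-orthogonality trick: one compares $\mu(B(z,2^j\delta))$ with $\mu(B(z,2^k\delta))$ using strong homogeneity, and the mismatch yields the decay). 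Summing the geometric series and applying H\"older in $j$ then yields $\|\mathcal{P}^+ f\|_{L^p_\sigma} \lesssim [\sigma]_{\mathcal{B}_p}^{?}\, \|\mathcal{M}f\|_{L^p_\sigma}$.

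An alternative, possibly cleaner route — which I would try first — is to prove the pointwise bound $\mathcal{P}^+ f(z) \lesssim \mathcal{M}f(z)$ after all, by observing that the averages $\frac{1}{\mu(B(z,2^{j+1}\delta))}\int_{B(z,2^{j+1}\delta)} |f|\,d\mu$ are \emph{not} all comparable to $\mathcal{M}f(z)$ but rather, once one is forced to take a ball of radius much larger than $d(z,b\Omega)$, the geometry of the finite-type domain forces $\mu(B(z,2^{j+1}\delta))$ to grow at a definite rate in $j$ while $\int |f|$ over a fixed bounded region is uniformly bounded by $\mathcal{M}f(z)\cdot\mu(B(z,2^{j_0}\delta))$ for the largest relevant scale $j_0$; combined with strong homogeneity this again gives a convergent geometric series in $j$. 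Whichever of the two implementations is chosen, the delicate point is the same: the kernel's size estimate alone is scale-invariant and hence not summable, so the summability must be extracted either from the weight via $[\sigma]_{\mathcal{B}_p}$ together with the smoothness exponent $\nu$, or from the strong homogeneity parameter $m$ together with the boundedness of $N$; reconciling this with the requirement that only boundary-touching balls appear in $\mathcal{M}$ is what makes the argument nontrivial. I would present the proof via the testing/duality formulation, as it most transparently mirrors Bekoll\`e \cite{Bekolle} and isolates the role of the $\mathcal{B}_p$ constant.
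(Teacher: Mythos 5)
Your starting point---the dyadic annular decomposition around $z$ at scales $2^j\delta$ with $\delta = d(z,b\Omega)$, combined with the two size estimates---is a sensible thing to try, and you correctly notice that it only yields $\mathcal{P}^+f(z) \lesssim \sum_j \langle |f|\rangle_{B(z,2^{j+1}\delta)} \lesssim \left(\sum_j 1\right)\mathcal{M}f(z)$, where the sum has $O(\log(1/\delta))$ nonzero terms and so is unbounded as $z\to b\Omega$. But neither of your two proposed repairs closes the gap, and the second one is provably hopeless: there is no pointwise bound $\mathcal{P}^+f(z)\lesssim \mathcal{M}f(z)$. Already on the unit ball with $f\equiv 1$ one has $\mathcal{P}^+1(z) \approx \log\frac{1}{1-|z|}\to\infty$ while $\mathcal{M}1\equiv 1$, so no appeal to strong homogeneity or the boundedness of $N$ can rescue an estimate that fails for the constant function. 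Your first repair---testing against $g\in L^{p'}_{\sigma'}$, expanding into annuli in both variables, and hoping the smoothness exponent $\nu$ together with $[\sigma]_{\mathcal{B}_p}$ manufactures $2^{-\epsilon j}$ decay---is not carried out and it is unclear how it could be: the smoothness estimate in Theorem \ref{size and smoothness} controls $|K(z,w)-K(z',w)|$ between \emph{two} nearby basepoints, and your fixed-center annular decomposition provides no second point $z'$ to compare against, so the $\nu$-decay has nowhere to enter. You also say this route ``most transparently mirrors Bekoll\`e,'' but Bekoll\`e's argument is a good-$\lambda$ inequality, not a duality/testing argument.

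The paper's proof supplies precisely the missing mechanism by actually running the good-$\lambda$ inequality: one shows $\sigma(\{\mathcal{P}^+f>2\lambda,\ \mathcal{M}f\le\gamma\lambda\})\le C\gamma^{\delta'}\sigma(\{\mathcal{P}^+f>\lambda\})$ via a Whitney decomposition of the open superlevel set. Each Whitney ball $B_0$ carries, by construction, a nearby point $z'$ with $\mathcal{P}^+f(z')\le\lambda$, and \emph{that} is where the smoothness estimate enters---after splitting $f=f_1+f_2$ with $f_1$ supported on an inflated ball $\overline{B}_0$, the far piece obeys $\mathcal{P}^+f_2(z)\le\lambda+A\gamma\lambda$ by comparing $K(z,w)$ with $K(z',w)$ in dyadic annuli around $z$ (this is where your annular idea lives, but anchored to $z'$). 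The genuinely new geometric content is in controlling the exceptional set inside $B_0$: Lemma \ref{radius control} converts the volume condition $\mu(B(z,d(z,b\Omega)))\lesssim\gamma\mu(B_0)$ into a thin-slab condition $d(z,b\Omega)\lesssim\gamma^{1/m}R_0$ via strong homogeneity, Lemma \ref{measure of top of ball} estimates the Lebesgue measure of that slab using McNeal's adapted coordinates, and the passage from Lebesgue to $\sigma$-measure goes through the $A_\infty$ fairness property of the regularized weight $R_k(\sigma)\in A_p$ established in the proof of Theorem \ref{maximalth}. None of these ingredients appear in your sketch, and without them the interaction between $\mathcal{P}^+$, $\mathcal{M}$, and $\sigma$ that the theorem asserts has not been established.
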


We will prove these two theorems in the following section. It is worth pointing out that Theorem \ref{maximalth} in conjunction with Theorem \ref{positive operator} shows that Theorem \ref{main} actually holds when $\mathcal{P}$ is replaced with $\mathcal{P}^+$, as is typical for Bergman-type operators. 

\section{The Sufficiency of the $\mathcal{B}_p$ Condition}

We begin by proving Theorem \ref{maximalth}. In what follows, we follow the general outline of the approach taken in \cite{Bekolle}. To begin with, we define a regularizing operator $R_k$ for $k \in (0,1)$:

$$R_k(f)(z):= \frac{1}{\mu(B_k(z))}\int_{B_k(z)} |f| \mathop{d \mu}$$ where $B_k(z)=\{w \in \Omega: d(w,z)< k d(z,b\Omega)\}$.

Intuitively, this regularizing operator spreads out the mass of the weight. We will ultimately show it turns $\mathcal{B}_p$ weights into $A_p$ weights. We begin with a simple proposition.

\begin{proposition}\label{triangle1} There exists a constant $C_d >1$ (depending on the quasi-metric $d$) so that if $k \in (0,\frac{1}{2C_d})$, then $z' \in B_k(z)$ implies $z \in B_{k'}(z')$, where $k'=\dfrac{C_d k}{1-C_d k}$.
\begin{proof}
This is a trivial consequence of the triangle inequality. In fact, we can take $C_d=c>1$, where $c$ is the implicit constant in the triangle inequality.
\end{proof}
\end{proposition}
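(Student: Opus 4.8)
The plan is to derive everything from the quasi-triangle inequality for $d$ together with the elementary bound $d(z,b\Omega) \lesssim d(z',b\Omega) + d(z,z')$ recorded just before Lemma~\ref{comparability}. Since we are free to replace $d$ by the equivalent symmetric quasi-metric $d(z,w)+d(w,z)$, I would first assume that $d$ is symmetric and let $c>1$ denote its quasi-triangle constant, so that $d(z,w) \le c\big(d(z,v)+d(v,w)\big)$ for all $z,v,w$; then set $C_d := c$.

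Next I would unwind the hypotheses. Assuming $k \in (0, \tfrac{1}{2C_d})$ and $z' \in B_k(z)$, by definition $d(z',z) < k\, d(z,b\Omega)$. Applying the quasi-triangle inequality to an arbitrary $\zeta \in b\Omega$ gives $d(z,\zeta) \le c\, d(z,z') + c\, d(z',\zeta)$, and taking the infimum over $\zeta \in b\Omega$ yields $d(z,b\Omega) \le C_d\, d(z,z') + C_d\, d(z',b\Omega) < C_d k\, d(z,b\Omega) + C_d\, d(z',b\Omega)$. Because $C_d k < \tfrac12 < 1$, I can absorb the first term on the right into the left-hand side to obtain $d(z,b\Omega) < \tfrac{C_d}{1-C_d k}\, d(z',b\Omega)$. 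Substituting this back into the hypothesis gives
$$ d(z,z') < k\, d(z,b\Omega) < \frac{C_d k}{1-C_d k}\, d(z',b\Omega) = k'\, d(z',b\Omega), $$
which is precisely the assertion $z \in B_{k'}(z')$.

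Finally I would record that the constants produced are legitimate: the constraint $k < \tfrac{1}{2C_d}$ forces $C_d k < \tfrac12$, hence $k' = \tfrac{C_d k}{1 - C_d k} < 1$, so $B_{k'}(z')$ is a ball of exactly the kind occurring in the definition of the regularizing operator $R_{k'}$. There is no serious obstacle in this argument — it is genuinely just the triangle inequality; the only points worth a moment of care are the reduction from a possibly non-symmetric $d$ to a symmetric one (harmless, as noted in the text) and the choice of the threshold $\tfrac{1}{2C_d}$ in place of the larger $\tfrac{1}{C_d}$, which is made precisely so that $k'$ stays below $1$.
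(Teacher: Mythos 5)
Your argument is correct and is exactly the computation the paper has in mind when it says the result "is a trivial consequence of the triangle inequality" with $C_d = c$; you have simply written out the absorption step that the paper suppresses, and your observation that the threshold $\tfrac{1}{2C_d}$ keeps $k' < 1$ is a useful sanity check not mentioned in the text.
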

It is also routine to verify that the radius of $B_{k'}(z')$ is at most a fixed multiple of the radius of $B_k(z)$ and the balls have comparable Lebesgue measure, where the implicit constants are independent of $k \in (0,\frac{1}{2C_d}).$
We need another simple proposition to furnish the next lemma. 

\begin{proposition}\label{triangle2} Let $B$ be a ball of radius $r$, center $z_0$, that touches the boundary of $\Omega$ (i.e $r>d(z_0,b\Omega)$). Let $k \in (0,1)$ be fixed. Then there exists an (absolute, independent of $k$) constant $\alpha$ so the dilated ball $\tilde{B}$ with radius $\alpha r$ and center $z_0$ satisfies $\tilde{B}\supset B_k(w)$ for all $w \in B$.
\begin{proof}
Again, the proof is routine. This is also a simple consequence of the triangle inequality.
\end{proof}
\end{proposition}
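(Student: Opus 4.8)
The plan is to prove Proposition \ref{triangle2} directly from the quasi-triangle inequality, exactly as the statement hints. Let $c$ be the implicit constant in the quasi-triangle inequality for $d$ (so $d(x,y) \leq c(d(x,z)+d(z,y))$ for all $x,y,z$; if $d$ has been symmetrized, $c$ is the symmetrized constant). Fix $w \in B$, so $d(z_0,w) < r$, and fix $k \in (0,1)$. For any point $u \in B_k(w)$ we have $d(u,w) < k\, d(w,b\Omega)$. I would first bound $d(w,b\Omega)$ in terms of $r$: since $B$ touches the boundary, $d(z_0,b\Omega) < r$, and by the near-triangle inequality for the distance-to-boundary function noted just before Lemma \ref{comparability}, $d(w,b\Omega) \lesssim d(z_0,b\Omega) + d(w,z_0) \lesssim r$. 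Call this bound $d(w,b\Omega) \leq C' r$ for an absolute constant $C'$ (independent of $k$ and of the ball).

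Next I would chain the estimates: for $u \in B_k(w)$,
$$d(z_0,u) \leq c\bigl(d(z_0,w) + d(w,u)\bigr) < c\bigl(r + k\, d(w,b\Omega)\bigr) \leq c\bigl(r + k C' r\bigr) \leq c(1+C')\, r,$$
using $k<1$ in the last step. Thus setting $\alpha := c(1+C')$, which depends only on the quasi-metric $d$ (through $c$) and on the absolute constant $C'$ from the distance-to-boundary inequality, but not on $k$, $w$, or the particular ball $B$, we conclude $u \in \tilde B$ where $\tilde B$ is the ball of center $z_0$ and radius $\alpha r$. Since $u \in B_k(w)$ was arbitrary, $B_k(w) \subset \tilde B$, and since $w \in B$ was arbitrary, $\tilde B \supset B_k(w)$ for all $w \in B$, as claimed.

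There is essentially no obstacle here; the only mild subtlety is making sure the constant $\alpha$ is genuinely independent of $k$, which is why the argument uses the crude bound $k < 1$ rather than anything finer, and independent of the ball $B$, which is why it is important to bound $d(w,b\Omega)$ by a multiple of $r$ using only that $B$ touches the boundary. One should also note that the quasi-metric $d$ is only defined on the relevant neighborhood (now called $\Omega$ by the abuse of notation introduced earlier), but this causes no trouble since all the points involved lie in $B$ or in dilates of $B$, which remain in the region where $d$ is defined for $r$ small; for large $r$ the balls are all of $\Omega$ and the statement is trivial.
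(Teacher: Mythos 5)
Your argument is correct, and it is exactly the ``simple consequence of the triangle inequality'' that the paper asserts without writing out: you chain the quasi-triangle inequality with the near-triangle inequality for $d(\cdot,b\Omega)$ stated just before Lemma~\ref{comparability}, and discard $k$ via $k<1$ to keep $\alpha$ absolute. The paper offers no further detail for this routine step, so there is nothing differing to compare.
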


We are now ready to prove the following significant lemma.

\begin{lemma}\label{inside}
For each $k \in (0,\frac{1}{2C_d})$, we have $\mathcal{M}f(z_0)\lesssim \mathcal{M}(R_{k}(f(z_0)))$ for $z_0 \in \Omega$. The implicit constant is independent of $k$.

\begin{proof}
Fix $k$ and let $B$ be an arbitrary ball touching $b\Omega$ and centered at $z_0$, $\tilde{B}$ an inflation of $B$ with radius chosen as in the previous proposition so that  $\tilde{B} \supset B_{k''}(w)$ for all $w \in B$, where $k''=\frac{k}{c(k+1)}$. Then we have the following:
\begin{eqnarray*}
\mathcal{M}(R_k(f(z_0))) & \geq & \frac{1}{\mu(\tilde{B})}\int_{\tilde{B}} \frac{1}{\mu(B_k(z))}\int_{B_k(z)} |f(w)| \mathop{d \mu(w)} \mathop{d \mu(z)}\\
& = & \frac{1}{\mu(\tilde{B})} \int_{\Omega} \int_{\tilde{B}} \frac{1}{\mu(B_k(z))}\chi_{B_k(z)}(w) |f(w)| \mathop{d \mu(z)} \mathop{d \mu(w)}\\
& \gtrsim & \frac{1}{\mu(\tilde{B})} \int_{B} \int_{\tilde{B}} \frac{1}{\mu(B_{k''}(w))}\chi_{B_{k''}(w)}(z) |f(w)| \mathop{d \mu(z)} \mathop{d \mu(w)}\\
& = &  \frac{1}{\mu(\tilde{B})}\int_{B} |f(w)| \mathop{d \mu(w)}\\
& \approx & \frac{1}{\mu(B)}\int_{B} |f(w)| \mathop{d \mu(w)}
\end{eqnarray*}        

\noindent where we used both propositions and the fact that Lebesgue measure $\mu$ is doubling on quasi-balls. Since the following estimate is true for all balls $B$ centered at $z_0$, the conclusion follows.

\end{proof}
\end{lemma}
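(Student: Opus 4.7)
The plan is to produce, for every admissible ball $B$ appearing in the supremum defining $\mathcal{M}f(z_0)$, a single slightly larger ball $\tilde B$ (centered at $z_0$) such that evaluating $\mathcal{M}(R_k f)(z_0)$ against $\tilde B$ dominates the average of $|f|$ over $B$, with constants independent of $k$ and $B$. Taking the supremum over $B$ then gives the pointwise inequality.

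First I would reduce to balls $B = B(z_0, R)$ centered at $z_0$ with $R \gtrsim d(z_0, b\Omega)$ by a routine re-centering and doubling argument; since the original ball in the supremum contains $z_0$ and touches $b\Omega$, a centered ball of comparable radius still satisfies the boundary-touching condition. Using Proposition \ref{triangle1} I choose a parameter $k''$ (comparable to $k$, so still in the admissible range $(0,1/(2C_d))$) with the property that $z \in B_{k''}(w)$ implies $w \in B_k(z)$, and then use Proposition \ref{triangle2} to pick an absolute dilation factor $\alpha$ so that $\tilde B := B(z_0, \alpha R) \supset B_{k''}(w)$ for every $w \in B$. The inflated ball $\tilde B$ still touches $b\Omega$, so it is a legitimate test ball for $\mathcal{M}(R_k f)(z_0)$.

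The core computation is a Fubini swap:
$$\mathcal{M}(R_k f)(z_0) \geq \frac{1}{\mu(\tilde B)} \int_{\Omega} |f(w)| \left(\int_{\tilde B} \frac{\chi_{B_k(z)}(w)}{\mu(B_k(z))}\, d\mu(z)\right) d\mu(w).$$
I would then restrict the outer integration to $w \in B$. For such $w$ the inner integration region contains $B_{k''}(w)$, and by the choice of $k''$ the characteristic function equals $1$ throughout $B_{k''}(w)$; moreover, for $z \in B_{k''}(w)$ the balls $B_k(z)$ and $B_{k''}(w)$ have comparable measures by the remark following Proposition \ref{triangle1}. Hence the inner integral is bounded below by an absolute positive constant, leaving $\frac{1}{\mu(\tilde B)}\int_B |f|\, d\mu$, and strong homogeneity gives $\mu(\tilde B) \approx \mu(B)$, completing the estimate.

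The one place where care is needed — and what I expect to be the only real obstacle — is the uniformity in $k$: the dilation $\alpha$, the parameter $k''$, and the volume comparability $\mu(B_k(z)) \approx \mu(B_{k''}(w))$ must all be controlled by constants independent of $k \in (0, 1/(2C_d))$. This is exactly what the explicit formula $k' = C_d k/(1 - C_d k)$ in Proposition \ref{triangle1} and the subsequent remark guarantee: both $k'$ and its inverse $k''$ remain bounded away from $0$ and $1$ in terms of $k$ throughout the admissible range, so the uniformity propagates through each step of the argument.
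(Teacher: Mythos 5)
Your proposal is correct and follows essentially the same route as the paper's proof: test $\mathcal{M}(R_k f)(z_0)$ against the inflated ball $\tilde B$ furnished by Proposition \ref{triangle2}, perform the Fubini swap, restrict the inner integration to $B_{k''}(w)$ where $k''$ is chosen via Proposition \ref{triangle1} so that $z \in B_{k''}(w)$ forces $w \in B_k(z)$, and invoke the remark on comparable ball measures together with doubling. The only addition is that you make explicit the routine re-centering reduction (from balls merely containing $z_0$ to balls centered at $z_0$) that the paper leaves implicit, which is a welcome clarification but not a different argument.
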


We have the additional following lemma which is a straightforward application of Proposition \ref{triangle1} (again the implicit constant is independent of $k$):

\begin{lemma}\label{switch}
Let $f,g$ be positive, locally integrable functions. For each $k \in (0,\frac{1}{2C_d})$, we have the inequality:

 $$\int_{\Omega} f R_k(g) \mathop{d \mu} \lesssim \int_\Omega R_{k'}(f)g \mathop{d \mu},$$
 where $k'=\dfrac{C_d k}{1-C_d k}$.
\begin{proof}
We have:
\begin{eqnarray*}
\int_{\Omega} f R_k(g) \mathop{d \mu} & =& \int_{\Omega} f(z) \frac{1}{\mu(B_k(z))}\int_{B_k(z)} g(w) \mathop{d \mu(w)} \mathop{d \mu(z)}\\
& = &  \int_{\Omega} \int_{\Omega}\frac{f(z)}{\mu(B_k(z))} \chi_{B_k(z)}(w) g(w) \mathop{d \mu(z)} \mathop{d \mu(w)}\\
& \lesssim & \int_{\Omega} \int_{\Omega}\frac{f(z)}{\mu(B_{k'}(w))} \chi_{B_{k'}(w)}(z) g(w) \mathop{d \mu(z)} \mathop{d \mu(w)}\\
& = & \int_{\Omega} g(w) \frac{1}{\mu(B_{k'}(w))}\int_{B_{k'}(w)} f(z) \mathop{d \mu(z)} \mathop{d \mu(w)}\\
& = &  \int_\Omega R_{k'}(f)g \mathop{d \mu},
\end{eqnarray*}
as desired.
\end{proof}
\end{lemma}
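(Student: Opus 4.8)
The final statement to prove is Lemma~\ref{switch}. The plan is to exploit a duality/Fubini argument exactly as one does for the basic averaging operators: write $R_k(g)(z)$ as an integral against the normalized characteristic function $\frac{1}{\mu(B_k(z))}\chi_{B_k(z)}(w)$, then swap the order of integration in the double integral $\int_\Omega f(z) R_k(g)(z)\,d\mu(z)$ to obtain $\int_\Omega g(w)\left(\int_\Omega \frac{f(z)}{\mu(B_k(z))}\chi_{B_k(z)}(w)\,d\mu(z)\right)d\mu(w)$. The inner integral is over the set of centers $z$ whose ball $B_k(z)$ contains $w$; by Proposition~\ref{triangle1}, for $k\in(0,\tfrac{1}{2C_d})$ this membership $w\in B_k(z)$ forces $z\in B_{k'}(w)$ with $k'=\frac{C_d k}{1-C_d k}$, so the domain of the inner integral is contained in $B_{k'}(w)$.

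First I would set up the Fubini step carefully (both $f,g\ge 0$ and locally integrable, so Tonelli applies and there is no integrability issue). Then I would invoke Proposition~\ref{triangle1} to replace $\chi_{B_k(z)}(w)$ by $\chi_{B_{k'}(w)}(z)$ — strictly, $\chi_{B_k(z)}(w)\le \chi_{B_{k'}(w)}(z)$, which is all we need since we want an upper bound. The remaining point is to compare the normalizing factors $\frac{1}{\mu(B_k(z))}$ and $\frac{1}{\mu(B_{k'}(w))}$: when $w\in B_k(z)$ and $z\in B_{k'}(w)$, the two balls have comparable Lebesgue measure, with constants independent of $k\in(0,\tfrac{1}{2C_d})$. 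This is precisely the routine fact noted just after Proposition~\ref{triangle1} (the radii are comparable up to a fixed multiple and $\mu$ is doubling on quasi-balls), so $\frac{1}{\mu(B_k(z))}\lesssim \frac{1}{\mu(B_{k'}(w))}$ uniformly. Substituting these two comparisons into the double integral and then recognizing $\frac{1}{\mu(B_{k'}(w))}\int_{B_{k'}(w)} f(z)\,d\mu(z) = R_{k'}(f)(w)$ gives the claimed inequality $\int_\Omega f R_k(g)\,d\mu \lesssim \int_\Omega R_{k'}(f) g\,d\mu$.

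The only subtlety — and the place where one must be a little careful rather than the genuine ``hard part'' — is ensuring that the implicit constant is truly independent of $k$. This comes down to the uniform (in $k$) comparability of $\mu(B_k(z))$ and $\mu(B_{k'}(w))$ for overlapping such balls, which in turn rests on the uniform comparability of their radii; both were asserted in the remark following Proposition~\ref{triangle1} and follow from the triangle inequality for the quasi-metric together with the strong homogeneity / doubling property, with all constants depending only on $d$ and not on $k$. Once that uniformity is in hand, the rest is the standard Fubini computation displayed in the proof, and there is no real obstacle. I would present it essentially as a four-line chain of (in)equalities: expand $R_k(g)$, apply Tonelli, apply Proposition~\ref{triangle1} together with the measure comparison to pass from $B_k(z)$ to $B_{k'}(w)$, and re-collapse the inner integral into $R_{k'}(f)$.
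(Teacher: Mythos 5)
Your proposal is correct and follows essentially the same route as the paper's proof: expand $R_k(g)$, apply Tonelli, use Proposition~\ref{triangle1} and the measure comparability remark to pass from $\frac{1}{\mu(B_k(z))}\chi_{B_k(z)}(w)$ to $\frac{1}{\mu(B_{k'}(w))}\chi_{B_{k'}(w)}(z)$, and recognize the result as $\int_\Omega R_{k'}(f)g\,d\mu$. You are somewhat more explicit than the paper about separating the characteristic-function inclusion from the measure comparison and about the uniformity in $k$, but the argument is the same.
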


The next lemma is fairly straightforward, but does require some care. 

\begin{lemma}\label{outside} Fix $k \in (0,\frac{1}{2C_d})$. Then for any positive, locally integrable function $g$  there holds
$$R_k(\mathcal{M}(g))(z) \approx \mathcal{M}(g)(z), $$
\noindent where the implicit constant is independent of $k$.
\begin{proof}
It suffices to prove that for any fixed $z \in \Omega$, there holds for $w \in B_k(z)$ 

$$\mathcal{M}(g)(w) \lesssim \inf_{z' \in B_k(z)} \mathcal{M}(g)(z') \leq \mathcal{M}(g)(w) ,$$

\noindent where the implicit constant is absolute. Assuming the claim, then
\begin{eqnarray*}
R_k(\mathcal{M}(g))(z) & = & \frac{1}{\mu(B_k(z))}\int_{B_k(z)} \mathcal{M}(g)(w) \mathop{d \mu(w)}\\
& \approx & \frac{1}{\mu(B_k(z))}\int_{B_k(z)} \inf_{z' \in B_k(z)} \mathcal{M}(g)(z')\mathop{d \mu(w)}\\
& = &  \inf_{z' \in B_k(z)} \mathcal{M}(g)(z')\\
& \approx & \mathcal{M}(g)(z).\\
\end{eqnarray*}

Now we prove the claim. The upper bound is trivial. Fix $z \in \Omega$. It is clearly sufficient to show that for any ball $B$ centered at $w \in B_k(z)$ touching the boundary with radius $r$, given any $z'\in B_k(z)$, there is a ball $\tilde{B}$ centered at $z'$ with radius $Cr$ so $\tilde{B} \supset{B}$. First, note that if $B$ touches $b\Omega$ we must have $r \geq \frac{1}{4c} d(z,b\Omega)$, otherwise
$$d(z,b\Omega) \leq c d(z,w)+ cd(w,b\Omega) \leq ck     [d(z,b\Omega)]+cr \leq \frac{1}{2}d(z,b\Omega)+\frac{1}{4}d(z,b\Omega)<d(z,b\Omega),$$
which is absurd. Therefore we may conclude $d(z,b\Omega) \leq 4cr$. 

Thus, if $w' \in B$, we have 

\begin{eqnarray*}
d(w',z') & \leq & c^2[d(w',w)+d(w,z)+d(z,z')]\\
& \leq & c^2[r+2kd(z,b\Omega)]\\
& \leq & 9c^3r
\end{eqnarray*}
so the claim is established by taking $C=9c^3$.

\end{proof} 
\end{lemma}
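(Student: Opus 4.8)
\textbf{Proof plan for Lemma \ref{outside}.}

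The plan is to reduce the claimed equivalence $R_k(\mathcal{M}(g)) \approx \mathcal{M}(g)$ to a pointwise quasi-constancy statement for $\mathcal{M}(g)$ on the balls $B_k(z)$: namely that for each fixed $z$, the values $\mathcal{M}(g)(w)$ for $w \in B_k(z)$ are all comparable, with absolute implicit constants. Once that is granted, averaging $\mathcal{M}(g)$ over $B_k(z)$ against the normalized measure $\mu/\mu(B_k(z))$ changes nothing up to constants, which is exactly the displayed chain of estimates: the average of a function comparable to its own infimum over the set is comparable to that infimum, which in turn is comparable to $\mathcal{M}(g)(z)$ itself (taking $z' = z$ in the claim). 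So the whole lemma hinges on proving the claim.

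For the claim, the upper bound $\inf_{z' \in B_k(z)} \mathcal{M}(g)(z') \leq \mathcal{M}(g)(w)$ is immediate since $w \in B_k(z)$. For the nontrivial direction I would show: given $w, z' \in B_k(z)$ and any ball $B = B(w,r)$ with $r > d(w,b\Omega)$ (an admissible ball in the definition of $\mathcal{M}$ centered at $w$), there is an admissible ball $\tilde B = B(z', Cr)$ with $\tilde B \supset B$ and $C$ absolute; then $\frac{1}{\mu(\tilde B)}\int_{\tilde B}|g| \gtrsim \frac{1}{\mu(B)}\int_B |g|$ by the doubling property of $\mu$ on quasi-balls, and taking the supremum over admissible $B$ centered at $w$ gives $\mathcal{M}(g)(w) \lesssim \mathcal{M}(g)(z')$, hence $\mathcal{M}(g)(w) \lesssim \inf_{z'} \mathcal{M}(g)(z')$. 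To build $\tilde B$: first note any admissible $B$ centered at $w \in B_k(z)$ must have $r \gtrsim d(z,b\Omega)$ — otherwise the quasi-triangle inequality forces $d(z,b\Omega) \le c\,d(z,w) + c\,d(w,b\Omega) \le ck\,d(z,b\Omega) + cr$, and with $k$ small and $r$ too small this yields $d(z,b\Omega) < d(z,b\Omega)$, a contradiction. This gives $d(z,b\Omega) \le 4cr$ (or a similar explicit bound). Then for $w' \in B$, iterate the quasi-triangle inequality $d(w',z') \le c^2[d(w',w) + d(w,z) + d(z,z')] \le c^2[r + 2k\,d(z,b\Omega)] \lesssim c^3 r$, so $\tilde B = B(z', 9c^3 r)$ works; one also checks $9c^3 r > d(z',b\Omega)$ so $\tilde B$ is itself admissible, using $d(z',b\Omega) \lesssim d(z',z) + d(z,b\Omega) \lesssim r$.

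The main obstacle — and it is a mild one — is keeping the constants genuinely independent of $k$: the parameter $k$ enters only through $d(z,z') \le k\,d(z,b\Omega)$ and $d(w,z) \le k\,d(z,b\Omega)$, and since $k < \tfrac{1}{2C_d} < 1$ these terms are dominated by $d(z,b\Omega)$, which in turn is controlled by $r$ via the contradiction argument above; so all estimates collapse to absolute constants depending only on the quasi-metric constant $c$. A secondary point of care is that $\mathcal{M}$ is defined via balls $B(w,R)$ touching the boundary, so one must confirm the inflated ball $\tilde B$ centered at $z'$ still satisfies $\alpha r > d(z',b\Omega)$ to be a legitimate competitor in the supremum defining $\mathcal{M}(g)(z')$; this follows from the same estimates. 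No boundary regularity or kernel input is needed here — the lemma is purely a statement about the geometry of admissible balls in the space of homogeneous type $(\Omega, d, \mu)$.
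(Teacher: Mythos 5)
Your proposal is correct and follows essentially the same route as the paper's proof: the same reduction to quasi-constancy of $\mathcal{M}(g)$ on $B_k(z)$, the same contradiction argument giving $d(z,b\Omega)\leq 4cr$, and the same iterated quasi-triangle inequality yielding the inflation constant $9c^3$. Your additional check that the inflated ball $\tilde{B}$ is itself an admissible competitor (i.e.\ touches $b\Omega$) is a detail the paper leaves implicit, and it is handled correctly.
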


We will need the following proposition concerning a kind of doubling property for $\mathcal{B}_p$ weights, which appears to be well-known insofar as it is used implicitly in B\'{e}koll\`{e}'s original paper. The proof is largely the same as the proof for the doubling of $A_p$ weights, so we omit it. 

\begin{proposition}\label{doubling}
Suppose $\sigma \in \mathcal{B}_p$. Let $B$ be a pseudo-ball (not necessarily touching $b\Omega$) such that $\lambda B$ touches $b\Omega$, where $\lambda>1$. Then for any $\lambda'>1$, we have

$$\sigma(\lambda'B) \lesssim \sigma(B),$$

\noindent where the implicit constant depends only on $\max\{\lambda,\lambda'\}$. 
\end{proposition}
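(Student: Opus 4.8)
The plan is to follow the classical argument for the doubling of Muckenhoupt $A_p$ weights, the only structural inputs being H\"older's inequality, the defining inequality of $\mathcal{B}_p$ applied to a single well-chosen ball that touches $b\Omega$, and the strong homogeneity of Lebesgue measure on quasi-balls.

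First I would record the basic comparison estimate: for any quasi-ball $\tilde{B}$ that touches $b\Omega$ and any measurable $E \subseteq \tilde{B}$, writing $q = p/(p-1)$, H\"older's inequality gives
$$\mu(E) = \int_E \sigma^{1/p}\sigma^{-1/p} \mathop{d\mu} \leq \sigma(E)^{1/p}\,\sigma'(E)^{1/q} \leq \sigma(E)^{1/p}\,\sigma'(\tilde{B})^{1/q},$$
so that $\mu(E)^p \leq \sigma(E)\,\sigma'(\tilde{B})^{p-1}$. Since $\tilde{B}$ touches $b\Omega$, the $\mathcal{B}_p$ condition applies to it and yields $\sigma'(\tilde{B})^{p-1} \leq [\sigma]_{\mathcal{B}_p}\,\mu(\tilde{B})^p/\sigma(\tilde{B})$. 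Combining the two inequalities gives the key estimate
$$\frac{\sigma(\tilde{B})}{\sigma(E)} \leq [\sigma]_{\mathcal{B}_p}\left(\frac{\mu(\tilde{B})}{\mu(E)}\right)^p.$$

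To conclude, set $\Lambda = \max\{\lambda,\lambda'\}$ and take $\tilde{B} = \Lambda B$. Because $\lambda B$ touches $b\Omega$ and enlarging the radius of a ball that touches the boundary keeps it touching, $\Lambda B$ also touches $b\Omega$ and hence is admissible in the $\mathcal{B}_p$ supremum; moreover $\lambda' B \subseteq \Lambda B$ since $\Lambda \geq \lambda'$. Applying the key estimate with $E = B$ and using $\sigma(\lambda' B) \leq \sigma(\Lambda B)$ gives
$$\sigma(\lambda' B) \leq [\sigma]_{\mathcal{B}_p}\left(\frac{\mu(\Lambda B)}{\mu(B)}\right)^p \sigma(B),$$
and by the strong homogeneity property there are constants $c_0, m$ with $\mu(\Lambda B) \leq c_0 \Lambda^m \mu(B)$, so $\sigma(\lambda' B) \leq c_0^p\,[\sigma]_{\mathcal{B}_p}\,\Lambda^{mp}\,\sigma(B)$, with implicit constant depending only on $\max\{\lambda,\lambda'\}$ together with the fixed data $p$, $m$, $c_0$, $[\sigma]_{\mathcal{B}_p}$. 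There is no genuine obstacle here; the only points needing a moment's care are checking that $\Lambda B$ really does touch $b\Omega$ so that it may be inserted into the $\mathcal{B}_p$ supremum, and that dilating quasi-balls behaves as expected, i.e. $\lambda' B \subseteq \Lambda B$ and $\mu(\Lambda B) \approx \mu(B)$ — both immediate from the definitions and the doubling of $\mu$.
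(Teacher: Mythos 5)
Your proof is correct and is precisely the standard $A_p$ doubling argument (Hölder on $\mu(E)$, the defining inequality applied to the dilated ball, and strong homogeneity of $\mu$), which is exactly what the paper invokes when it omits the proof as "largely the same as the proof for the doubling of $A_p$ weights." The one point that genuinely needed checking in this setting — that $\Lambda B$ remains admissible in the $\mathcal{B}_p$ supremum because enlarging a ball centered at $z_0$ with radius exceeding $d(z_0,b\Omega)$ keeps it touching — you handled correctly.
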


We now proceed to the proof of Theorem \ref{maximalth}.

\begin{proof}[Proof of Theorem~\ref{maximalth}]
Using the results previously proven, we can make the following progress to proving the theorem, fixing $k \in (0,\frac{1}{2C_d})$ (some of the following implicit constants can depend on $k$, but $k$ is fixed):

\begin{eqnarray*}
\int_{\Omega} [\mathcal{M}(f)(z)]^p \sigma \mathop{d\mu} & \lesssim & \int_{\Omega} [R_k(\mathcal{M}(R_k(|f|)))]^p \sigma \mathop{d \mu}\\
& \leq &  \int_{\Omega} R_k[[\mathcal{M}(R_k(|f|))]^p] \sigma \mathop{d \mu}\\
& \lesssim & \int_{\Omega} [\mathcal{M}(R_k(|f|))]^p R_{k'}(\sigma) \mathop{d \mu}\\
& \lesssim & \int_{\Omega} [\mathcal{M}(R_k(|f|))]^p R_k(\sigma) \mathop{d \mu},
\end{eqnarray*}

\noindent where in the first inequality we use Lemmas \ref{inside} and \ref{outside}, the second inequality is H\"{o}lder, the penultimate inequality is Lemma \ref{switch}, and the last inequality is given by the doubling property of $\sigma$ given in Proposition \ref{doubling}.

Now, if we can prove that the weight $R_k(\sigma)$ belongs to $A_p$, by ordinary weighted theory the last quantity will be controlled by 
$$C_p \int_{\Omega} [R_k(|f|)]^p R_k(\sigma) \mathop{d \mu}.$$
 Assuming this, then we have
\begin{eqnarray*}
\int_{\Omega} [R_k(|f|)]^p R_k(\sigma) \mathop{d \mu} & \leq &\int_{\Omega} R_k(|f|^p \sigma)[R_k(\sigma^{-1/(p-1)})]^{p-1} R_k(\sigma) \mathop{d \mu}\\
& \lesssim & [\sigma]_{\mathcal{B}_p} \int_{\Omega} R_k(|f|^p \sigma) \mathop{d \mu}\\
& \lesssim & [\sigma]_{\mathcal{B}_p} \int_{\Omega} |f|^p \sigma \mathop{d \mu}
\end{eqnarray*}

\noindent where in the first inequality we use H\"{o}lder, the second inequality comes from the fact that [$R_k(\sigma^{-1/(p-1)})]^{p-1} R_k(\sigma) \lesssim [\sigma]_{\mathcal{B}_p}$ (to see this, inflate the balls $B_k(z)$ by at most a fixed amount so they touch the boundary), and for the last step use Lemma \ref{switch}.

Thus, it remains to prove that $R_k(\sigma) \in A_p$. To see this we need to consider two cases for the ball $B(z_0,r)$ over which we take averages: the case where $d(z_0,b\Omega)< 2cr$ (we can inflate the ball so it touches the boundary), and the case where $d(z_0,b\Omega)\geq 2cr$.  For the first case, we proceed as follows:
$$
\frac{1}{\mu(B)}\int_{B} R_k(\sigma) \mathop{d \mu} \lesssim \frac{1}{\mu(B)}\int_{B} \sigma \mathop{d \mu } $$

\noindent using Lemma \ref{switch}, while the other factor is controlled as follows:

\begin{eqnarray*}
\left(\frac{1}{\mu(B)}\int_{B} [R_k(\sigma)]^{-1/(p-1)} \mathop{d \mu}\right)^{p-1} &  = & \left(\frac{1}{\mu(B)}\int_{B} \left(\frac{1}{B_k(z)} \int_{B_k(z)} \sigma(w) \mathop{d \mu(w)}\right)^{-1/(p-1)} \mathop{d \mu(z)}\right)^{p-1}\\
& \leq & \left(\frac{1}{\mu(B)}\int_{B} \frac{1}{B_k(z)} \int_{B_k(z)} \sigma(w)^{-1/(p-1)} \mathop{d \mu(w)}\ \mathop{d \mu(z)}\right)^{p-1}\\
& \lesssim & \left(\frac{1}{\mu(B)}\int_{B} \sigma^{-1/(p-1)} \mathop{d \mu}\right)^{p-1}
\end{eqnarray*}
where for the first inequality we used H\"{o}lder's inequality with negative exponents and the second inequality we used Lemma \ref{switch}. Thus, we clearly have:

$$\left(\frac{1}{\mu(B)}\int_{B} R_k(\sigma) \mathop{d \mu} \right) \left(\frac{1}{\mu(B)}\int_{B} [R_k(\sigma)]^{-1/(p-1)} \mathop{d \mu}\right)^{p-1}\lesssim [\sigma]_{\mathcal{B}_p},$$

\noindent inflating the balls by a fixed amount so they touch the boundary if necessary. 

For the other case, observe $d(z_0,b\Omega) \geq 2cr$, so $r\leq \frac{1}{2c}d(z_0,b\Omega)$. One can verify that given $w \in B$, the balls $B_k(z_0)$ and $ B_k(w)$ have comparable radii. From this it is simple to deduce that if $C_B=R_k(\sigma)(z_0)$, then the following bounds hold for $z \in B$:
$$C_B \lesssim R_k(\sigma)(z) \lesssim C_B$$
where the implicit constants are absolute. It easily follows that $R_k(\sigma) \in A_p$.

\end{proof}

We now state a couple of technical lemmas that will assist us in the proof of Theorem \ref{positive operator}. In particular, they mitigate some difficulties that occur when passing from the proof for the unit ball to the more general cases we consider. 

\begin{lemma}\label{radius control}
 Fix constants $\gamma, \alpha_1, \alpha_2.$ Let $B_0=B(z_0,R_0)$ be a quasi-ball with the property that if $z \in B_0$, then $d(z,b\Omega) \leq \alpha_1 R_0$ and $B_0 \subset B(z, \alpha_1 R_0).$ Define
$$F= \{z \in B_0: \mu(B(z,d(z,b\Omega))) \leq \alpha_2 \gamma \mu(B_0)\}.$$ Then $F \subset \tilde{F}$, where
$$\tilde{F}= \{z \in B_0: d(z,b\Omega) \leq \alpha' \gamma^{\frac{1}{m}} R_0\}$$
and $\alpha'= \alpha_1(c_0 \alpha_2)^{\frac{1}{m}}$. Here $c_0$ and $m$ are the constants in the strong homogeneity property. 
\begin{proof}
Using the strong homogeneity property,

$$\mu(B_0) \leq \mu(B(z,\alpha_1 R_0)) \leq c_0 \left(\frac{\alpha_1 R_0}{d(z,b\Omega)}\right)^m \mu(B(z,d(z,b\Omega))).$$

\noindent If we assume $z \in F$, by the definition of the set $F$, we get an upper bound on $\mu(B(z,d(z,b\Omega)))$, and arrive at the inequality

$$\mu(B_0) \leq c_0 \left(\frac{\alpha_1 R_0}{d(z,b\Omega)}\right)^m \alpha_2 \gamma \mu(B_0).$$

\noindent Thus to avoid absurdity we clearly need 

$$c_0 \left(\frac{\alpha_1 R_0}{d(z,b\Omega)}\right)^m \alpha_2 \gamma \geq 1.$$

\noindent Rearranging this expression, we obtain $$d(z,b\Omega) \leq \alpha' \gamma^{\frac{1}{m}}R_0,$$

\noindent where $\alpha'= \alpha_1(c_0 \alpha_2)^{\frac{1}{m}}$, as required.

\end{proof}

\end{lemma}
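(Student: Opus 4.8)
The plan is to use the strong homogeneity property to translate the measure-theoretic condition defining $F$ into a size condition on $d(z,b\Omega)$, and then simply solve the resulting inequality. Fix an arbitrary $z \in F$. By the hypothesis on $B_0$ we have both $d(z,b\Omega) \le \alpha_1 R_0$ and $B_0 \subset B(z,\alpha_1 R_0)$. The first of these ensures that the dilation factor $\lambda := \alpha_1 R_0/d(z,b\Omega)$ is at least $1$, so the strong homogeneity property may be applied to the concentric balls $B(z,d(z,b\Omega)) \subset B(z,\alpha_1 R_0)$. Chaining the inclusion $B_0 \subset B(z,\alpha_1 R_0)$ with monotonicity of $\mu$ and then strong homogeneity, I would write
$$\mu(B_0) \le \mu(B(z,\alpha_1 R_0)) \le c_0 \left(\frac{\alpha_1 R_0}{d(z,b\Omega)}\right)^m \mu\big(B(z,d(z,b\Omega))\big).$$

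Next I would insert the inequality $\mu(B(z,d(z,b\Omega))) \le \alpha_2 \gamma\, \mu(B_0)$ coming from $z \in F$, obtaining
$$\mu(B_0) \le c_0 \left(\frac{\alpha_1 R_0}{d(z,b\Omega)}\right)^m \alpha_2 \gamma\, \mu(B_0).$$
Since $B_0$ is a genuine quasi-ball sitting inside the (bounded) relative neighborhood to which we have reduced, $0 < \mu(B_0) < \infty$, so cancelling $\mu(B_0)$ is legitimate and leaves $c_0\alpha_2\gamma\,(\alpha_1 R_0)^m \ge d(z,b\Omega)^m$. Taking $m$-th roots and rearranging yields $d(z,b\Omega) \le \alpha_1(c_0\alpha_2)^{1/m}\gamma^{1/m} R_0 = \alpha'\gamma^{1/m}R_0$; together with $z \in B_0$ this is precisely the statement $z \in \tilde F$. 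As $z$ was an arbitrary element of $F$, we conclude $F \subset \tilde F$.

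I do not expect a real obstacle here: the argument is a one-line application of strong homogeneity followed by elementary algebra. The two points that warrant a moment of attention are the applicability of the strong homogeneity property, which requires the ratio $\alpha_1 R_0/d(z,b\Omega)$ to be $\ge 1$ — and this is exactly why the hypothesis ``$d(z,b\Omega)\le \alpha_1 R_0$ for every $z\in B_0$'' was built into the statement — and the finiteness of $\mu(B_0)$ needed to justify the cancellation step, which holds because all the balls in question live in the bounded neighborhood we have reduced to.
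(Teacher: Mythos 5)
Your proof is correct and follows exactly the same route as the paper's: chain $B_0 \subset B(z,\alpha_1 R_0)$ with strong homogeneity, plug in the defining inequality of $F$, cancel $\mu(B_0)$, and solve for $d(z,b\Omega)$. The extra remarks on why $\lambda \ge 1$ and $0<\mu(B_0)<\infty$ are sensible bookkeeping but do not change the argument.
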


\begin{lemma}\label{measure of top of ball}
Let $\alpha'$ be a fixed constant, $\gamma>0$ a constant to be chosen later. Let $B_0=B(z_0,R_0)$ be a pseudo-ball that touches the boundary and $F=\{z \in B_0: d(z, b \Omega)\leq \alpha' \gamma^{1/m} R_0\}$. Then, if $\gamma$ is sufficiently small,
$$\mu(F) \lesssim \gamma^{\frac{1}{m}} \mu(B_0).$$

\begin{proof}
We need to consider two cases: when $R_0$ is large and when $R_0$ is small. We first consider the case when $R_0<R_{\Omega}$ is small, where $R_{\Omega}$ is some appropriately chosen absolute constant that depends only on $\Omega$. We may assume that $B_0$ lies completely in one of the neighborhoods $U$ where the local quasi-metric was constructed. To obtain a favorable estimate on the measure of $F$ in this case, it is easiest to consider the local coordinates constructed by McNeal.

Recall the metric $d$ is constructed by patching together these local metrics, so it suffices to work with the local coordinates on a local level. Recalling $z_0$ denotes the center of $B_0$, we work with coordinates $z=(z_1,z_2,\dots, z_n)$ centered at $z_0$ and with parameter $\delta=R_0$. Note that $B$ can be taken to be $P(z_0,R_0)$, or at least some multiple that will not affect the argument. Let $z_j=x_j+\ii y_j$, $1 \leq j \leq n$. For $z \in P(z_0,R_0)$, write $z=(x_1,y_1,x_2,y_2,\dots,x_n,y_n) \in \mathbb{R}^{2n}$ and write $z'=(y_1,x_2,y_2,\dots,x_n,y_n)$. For $z \in P(z_0,R_0),$ we define the function $R(z')=\sup\{x_1:(x_1,z') \in P(z_0,R_0)\}$. We need to do this because the polydisc may ``extend" past the domain, but we are only considering the measure of the portion that lies in $\Omega$. One can show using geometric arguments that if $z \in F$ then one has the bounds $ R(z')-c'\alpha' \gamma^{\frac{1}{m}}\leq x_1 \leq R(z')$, where $c'$ is some absolute constant. The upper bound is clear by definition. The lower bound follows from the fact that $\frac{\partial \rho}{\partial x_1}>0$ on $U$. Denote by $\sigma_1(z,b \Omega)$ (not to be confused with $\tau_1(z_0,\delta)$) the distance from a point $z$ to $b\Omega$ along the (real) line in the direction of (positive) $x_1$. We show $d(z,b\Omega) \gtrsim \sigma_1(z,b\Omega)$ for all $z$ in this neighborhood. Note if we fix $z \in P(z_0,R_0)$, freezing all the variables except $x_1$, we can select $w=(x_1',y_1,x_2,y_2,\dots,x_n,y_n)$ by increasing $x_1$ so $w \in b\Omega$. Then by the mean value theorem (in one real variable), there is a point $\zeta$ in the neighborhood $U$ so  

\begin{eqnarray*}
d(z,b\Omega) & \approx & |\rho(z)|\\
& = & |\rho(z)-\rho(w)|\\
& = &  \left|\frac{\partial \rho(\zeta)}{\partial x_1}\right| |x_1'-x_1|\\
& \approx & x_1'-x_1\\
& = & \sigma_1(z,b\Omega)
\end{eqnarray*}

\noindent where the implicit constant is independent of $z$.
Crucially we use the fact that $ \frac{\partial \rho}{\partial x_1}$ is bounded away from zero by the coordinate construction. This shows that $\sigma_1(z,b\Omega) \lesssim d(z,b\Omega)$ and establishes the claim.

Thus, we can gain control on the measure of $F$ by integrating in these coordinates, using Fubini and noting the function $R(z')$ will vanish after the first variable is integrated:
\begin{eqnarray*} \mu(F) & \lesssim &  \int_{|z_n| \leq \tau_n(z_0,R_0)} \int_{|z_{n-1}| \leq \tau_{n-1}(z_0,R_0)} \dots \int_{|y_1| \leq R_0} \int_{R(z')-c'\alpha'\gamma^{1/m}R_0 \leq |x_1| \leq  R(z')}
\mathop{d x_1} \mathop{d y_1} \dots \mathop{d y_{n-1}} \mathop{d y_n}
\\
&\approx &  \gamma^{\frac{1}{m}} R_0^2 \prod_{j=2}^{n} \left(\tau_j(z,R_0)\right)^2 \\
& \approx & \gamma^{\frac{1}{m}} \mu(B_0)
\end{eqnarray*}
which yields the required estimate.

Now suppose that $R_0 \geq R_{\Omega}$. Since we are assuming $\gamma$ is small, we can cover $F$ and $b\Omega$ with finitely many small (Euclidean balls) so that in each ball, the normal projection to the boundary is well-defined. Then, in each of these balls with center $z_c$ we can introduce a smooth change of coordinates $z=(z_1,\dots,z_n)$ centered at $\pi(z_c)$, where $\pi$ denotes the normal projection to the boundary, so we have $x_1$ is in the real normal direction at $\pi(z_c)$ and the coordinates $z_2,\dots z_n$ lie in the real tangent plane at $\pi(z_c)$. A similar type of coordinate system is employed in \cite[Lemma 4.1]{LS}. In each ball, we can perform an integration very similar to the one above in these coordinates and obtain $ \mu(F) \leq C_\Omega \gamma^\frac{1}{m} R_0$, where $C_\Omega$ is a constant depending only on the ambient domain. Since $R_0$ is uniformly bounded above and below by assumption, we also have $\mu(B_0)$ is bounded above and below by a universal constant for the domain. Thus, we can deduce that $\mu(F) \lesssim \gamma^{\frac{1}{m}}\mu(B_0)$, as desired.
\end{proof}

\end{lemma}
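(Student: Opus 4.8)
The plan is to exploit the local product structure of the polydiscs $P(q,\delta)$ and reduce the measure of $F$ to a one-dimensional computation in the (real) normal direction, splitting into two regimes according to the size of $R_0$: a small-radius regime $R_0 < R_\Omega$, in which $B_0$ fits inside a single coordinate neighborhood $U$ and McNeal's coordinates from Proposition~\ref{polydiscs} apply directly, and a large-radius regime $R_0 \geq R_\Omega$, in which $B_0$ is comparable to a fixed ball and compactness of $b\Omega$ takes over.

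For the small-radius case, I would fix the coordinates $z=(z_1,\dots,z_n)$ of Proposition~\ref{polydiscs} centered at $z_0$ with parameter $\delta=R_0$, so $B_0$ is comparable to $P(z_0,R_0)$ up to a harmless fixed dilation. Writing $z_1=x_1+\ii y_1$ and collecting the remaining real coordinates into $z'=(y_1,x_2,y_2,\dots,x_n,y_n)$, I set $R(z')=\sup\{x_1:(x_1,z')\in P(z_0,R_0)\}$ to describe the ``right face'' of the (truncated) polydisc, since the polydisc may extend past $\Omega$. The heart of the argument is the comparison $d(z,b\Omega)\approx \sigma_1(z,b\Omega)$, where $\sigma_1(z,b\Omega)$ is the distance from $z$ to $b\Omega$ along the positive $x_1$-axis: the bound $d(z,b\Omega)\lesssim \sigma_1(z,b\Omega)$ is immediate because travelling along $x_1$ exhibits a boundary point at quasi-distance $\lesssim \sigma_1(z,b\Omega)$ (the first coordinate of the polydisc is the radial direction), while the reverse bound follows from $d(z,b\Omega)\approx |\rho(z)|$ (Lemma~\ref{comparability}) combined with the mean value theorem applied to $t\mapsto \rho(x_1+t,z')$ and the transversality $\partial\rho/\partial x_1 > c>0$ on $U$ guaranteed by Proposition~\ref{polydiscs}, which forces $|\rho(z)|\approx x_1'-x_1=\sigma_1(z,b\Omega)$ where $x_1'$ is the first coordinate of the boundary point hit. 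Hence $z\in F$ confines $x_1$ to the slab $R(z')-c'\alpha'\gamma^{1/m}R_0\leq x_1\leq R(z')$ of thickness $O(\gamma^{1/m}R_0)$ uniformly in $z'$, while the $z'$-variables still range over sets of size $\approx R_0$ (for $y_1$) and $\approx \tau_j(z_0,R_0)$ (for $z_j$, $j\geq 2$). Integrating with Fubini, the $x_1$-integral contributing $O(\gamma^{1/m}R_0)$, gives
$$\mu(F)\lesssim \gamma^{1/m} R_0^2\prod_{j=2}^{n}\tau_j(z_0,R_0)^2 \approx \gamma^{1/m}\mu(P(z_0,R_0))\approx \gamma^{1/m}\mu(B_0),$$
using $\mu(P(q,\delta))\approx \delta^2\prod_{j\geq 2}\tau_j(q,\delta)^2$ and the comparability of $B_0$ with this polydisc.

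For the large-radius case $R_0\geq R_\Omega$, smallness of $\gamma$ places $F$ in a thin Euclidean neighborhood of $b\Omega$ (again via Lemma~\ref{comparability}). I would cover a neighborhood of $b\Omega$ by finitely many small Euclidean balls in each of which the normal projection $\pi$ is well defined, and in the ball centered at $z_c$ introduce a smooth coordinate change centered at $\pi(z_c)$ with $x_1$ in the real normal direction and $z_2,\dots,z_n$ spanning the real tangent plane, as in \cite[Lemma 4.1]{LS}. The identical one-dimensional estimate—now with $|\rho|$ comparable to Euclidean normal distance—yields $\mu(F\cap \text{ball})\lesssim \gamma^{1/m}$ in each of the finitely many balls, so $\mu(F)\leq C_\Omega\,\gamma^{1/m}$; since $R_\Omega\leq R_0\lesssim \operatorname{diam}\Omega$, we have $\mu(B_0)\gtrsim 1$, whence $\mu(F)\lesssim \gamma^{1/m}\mu(B_0)$.

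I expect the main obstacle to be making the comparison $d(z,b\Omega)\approx \sigma_1(z,b\Omega)$ rigorous and uniform: one must check that, starting from an arbitrary $z\in P(z_0,R_0)$, the ray in the $x_1$-direction actually reaches $b\Omega$ while still inside the neighborhood $U$ where transversality $\partial\rho/\partial x_1>c$ is available, and that the mean-value-theorem constants are independent of $z$ and of $R_0$. Once this comparison and the resulting ``thin slab'' description of $F$ are secured, the Fubini computation and the compactness patching in the large-radius case are routine.
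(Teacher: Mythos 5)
Your proposal is correct and follows essentially the same route as the paper: the same split into $R_0<R_\Omega$ and $R_0\geq R_\Omega$, the same use of McNeal's coordinates with $\delta=R_0$ to identify $B_0$ with $P(z_0,R_0)$, the same key comparison $d(z,b\Omega)\approx\sigma_1(z,b\Omega)$ via the mean value theorem and the transversality $\partial\rho/\partial x_1>c$, the same Fubini computation over the thin slab in $x_1$, and the same finite covering with normal coordinates in the large-radius case. The caveat you flag at the end (uniformity of the MVT constants and staying inside $U$) is exactly the point the paper also leaves at the level of "geometric arguments," so nothing essential is missing relative to the published argument.
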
 

Next, we proceed to prove Theorem \ref{positive operator}. In what follows, we consider the positive Bergman operator

$$\mathcal{P}^+f(z)=\int_\Omega |K(z,w)| f(w) \mathop{d \mu(w)}.$$

It is known for the strongly pseudoconvex and convex finite type cases that the positive operator $\mathcal{P}^+$ is bounded on $L^p(\Omega)$, $1<p<\infty$ (see \cite{McSt}, \cite{PS}). We remark that our proof obtains the same result for the other cases in addition to the weighted estimates (just take $\sigma=1$).

\begin{proof}[Proof of Theorem \ref{positive operator}]
We proceed by proving a good-$\lambda$ inequality as in classical singular integral theory and B\'{e}koll\`{e}'s paper. In particular, we will show that there exist positive constants $C$ and $\delta$ so that given any $f \in L^1(\Omega)$ and $\lambda, \gamma>0$ we have

$$\sigma\left( \{ \mathcal{P}^+f>2 \lambda \text{ and } \mathcal{M}f\leq  \gamma \lambda \}\right) \leq C \gamma^\delta \sigma\left(\{\mathcal{P}^+f>\lambda\}\right).$$

By the regularity of $\sigma$, it suffices to prove 

$$\sigma\left( \{z \in \mathcal{O}: \mathcal{P}^+f>2 \lambda \text{ and } \mathcal{M}f\leq  \gamma \lambda \}\right) \leq C \gamma^\delta \sigma\left(\mathcal{O} \right).$$

\noindent for any open set $\mathcal{O}$ containing $\{z \in \Omega: \mathcal{P}^+f>\lambda\}$. Applying a Whitney decomposition to $\mathcal{O}$, consider a fixed ball $B_0$ in the Whitney decomposition with center $z_0$ and radius $R_0$. It suffices to show 

$$\sigma\left( \{z \in B_0: \mathcal{P}^+f>2 \lambda \text{ and } \mathcal{M}f\leq  \gamma \lambda \}\right) \leq C \gamma^\delta \sigma\left(B_0 \right).$$

We may assume that there exists a $\zeta_0 \in B_0$ so that $\mathcal{M}f(\zeta_0)\leq \gamma \lambda$, otherwise the inequality is trivial. Also note we are free to take $\gamma$ sufficiently small as the inequality is trivial for large $\gamma$. By properties of the Whitney decomposition, we know that for some inflation constant $c_1>1$, the ball $\tilde{B_0}$ with radius $c_1R_0$ contains a point $z'$ so that $\mathcal{P}^+f(z') \leq \lambda$. Finally, let $c_2$ be chosen large enough so that the ball centered at $z'$ with radius $c_2R_0$ contains $B_0$ and let $\overline{B}_0$ be the ball centered at $ z'$ with radius equal to $\rho= \max\{ d(z',b\Omega), c_2R_0 \}$. Without loss of generality we may assume $c_2>>c_1>>c$.

Write $f=f_1+f_2$ where $f_1=f\chi_{\overline{B}_0}$ and $f_2= f\chi_{\Omega \setminus \overline{B}_0}$. Without loss of generality, we may assume $f$ is positive. We first show there exists an absolute constant $A$ so that for $z \in B_0$, $\mathcal{P}^+f_2(z) \leq \lambda+A\gamma \lambda.$

We have, for $z \in B_0$,
\begin{eqnarray*}
\mathcal{P}^+f_2(z)\ & = & \int_{\Omega \setminus \overline{B}_0} |K(z,w)| f(w) \mathop{d \mu(w)}\\
& \leq & \int_{\Omega} |K(z',w)| f(w) \mathop{d \mu(w)}+\int_{\Omega \setminus \overline{B}_0} |K(z,w)-K(z',w)| |f(w)| \mathop{d \mu(w)}.
 \end{eqnarray*}
 
Obviously, for the first term we have

$$\int_{\Omega} |K(z',w)| f(w) \mathop{d \mu(w)}=\mathcal{P}^+f(z')<\lambda.$$

The second term is handled as follows. First notice that if $w \in \Omega \setminus \overline{B}_0$, we have $d(z,w) \geq C_2 d(z,z')$, provided $c_2$ is taken appropriately large. Also, it can be shown $d(z,w) \gtrsim \rho$. For $0\leq k<\infty$, let $$A_k=\{w \in \Omega: 2^k \rho'  \leq d(z,w) \leq 2^{k+1} \rho' \} $$ 
\noindent where $\rho'=\inf_{w \in \Omega \setminus \overline{B}_0}d(z,w) \approx \max\{C_2 d(z,z'),\rho\}$. Then we estimate:

\begin{eqnarray*} \int_{\Omega \setminus \overline{B}_0} |K(z,w)-K(z',w)| |f(w)| \mathop{d \mu(w)} & \leq  & \int_{\Omega \setminus \overline{B}_0}\left(\frac{d(z,z')}{d(z,w)}\right)^{\nu}\frac{|f(w)|}{\mu(B(z,d(z,w)))} \mathop{d \mu(w)}\\
& \leq & \sum_{k=0}^{\infty} \int_{A_k}\left(\frac{d(z,z')}{d(z,w)}\right)^{\nu}\frac{|f(w)|}{\mu(B(z,d(z,w)))} \mathop{d \mu(w)}\\
& \lesssim & \sum_{k=0}^{\infty} \int_{A_k}2^{-k \nu}\frac{|f(w)|}{\mu(B(z,2^k \rho'))} \mathop{d \mu(w)}\\
& = & \sum_{k=0}^{\infty} \frac{2^{-k \nu}}{\mu(B(z,2^{k+1} \rho' ))} \int_{A_k}\frac{|f(w)|\mu(B(z,2^{k+1}\rho'))}{\mu(B(z,2^k \rho'))} \mathop{d \mu(w)}\\
& \lesssim & \mathcal{M}f(\zeta_0)\\
& \leq & \gamma \lambda.
\end{eqnarray*}

Now we must consider some cases. First consider the case when $d(z', b\Omega) \geq c_2 R_0$. We then have the easy estimate:
\begin{eqnarray*}
\mathcal{P}^+f_1(z) & = & \int_{\overline{B}_0} |K(z,w)||f(w)| \mathop{d \mu(w)}\\
& \leq & \frac{1}{\mu(B(z,d(z,b\Omega)))}\int_{\overline{B}_0} |f(w)| \mathop{d \mu(w)}\\
& \lesssim &  \frac{1}{\mu(\overline{B}_0)}
}\int_{\overline{B}_0} |f(w)| \mathop{d \mu(w)\\
& \lesssim & \mathcal{M}f(\zeta_0)\\
& \leq & \gamma \lambda.
\end{eqnarray*}

By choosing $\gamma$ sufficiently small, it is clear we can make the left hand side of the good-$\lambda$ inequality equal to $0$, so the inequality is trivial in this case.

Now for the other case suppose that $d(z', b\Omega) < c_2 R_0.$ Note that if $\mathcal{P}^+f(z)>2 \lambda$, then by what we have shown above $\mathcal{P}^+f_1(z)>b \lambda$ where $b=2-(1+A \gamma)$. We estimate:
\begin{eqnarray*}
b \lambda & < & \mathcal{P}^+f_1(z)\\
& \leq & \int_{\overline{B}_0} |K(z,w)||f(w)| \mathop{d \mu(w)}\\
& \leq & \frac{1}{\mu(B(z,d(z,b\Omega)))} \int_{\overline{B}_0} |f(w)| \mathop{d \mu(w)}\\ 
& = & \frac{\mu(B(z',c_2R_0))}{\mu(B(z,d(z,b\Omega)))} \frac{1}{\mu(\overline{B}_0)}\int_{\overline{B}_0} |f(w)| \mathop{d \mu(w)}\\
& \lesssim & \frac{\mu(B(z',c_2R_0))}{\mu(B(z,d(z,b\Omega)))} \mathcal{M}f(\zeta_0)\\
& \leq & \frac{\mu(B(z',c_2R_0))}{\mu(B(z,d(z,b\Omega)))} \gamma \lambda.
\end{eqnarray*}

This implies the following: 

$$\mu(B(z,d(z, b\Omega)))\lesssim \gamma \mu(B(z',c_2R_0))\lesssim \gamma \mu(B(z,c_2R_0)).$$

Let $$F= \{z \in B_0: \mu(B(z,d(z,b\Omega))) \leq \alpha \gamma \mu(B(z,c_2R_0))\}$$
where $\alpha$ is the implicit constant above. By renaming $\alpha$, we can replace $\mu(B(z,c_2R_0))$ by $\mu(B_0)$, using the doubling property. Note that by the above we have proven

$$\{z \in B_0: \mathcal{P}^+f(z)> 2 \lambda \text{ and } \mathcal{M}f(z)\leq \gamma \lambda\} \subset F.$$

We need to prove that we have good control over the measure of the set $F$. In particular, we claim $\mu(F) \lesssim \gamma^{\frac{1}{m}} \mu(B_0)$ where we recall $m$ is the exponent, characteristic of the domain, that appears in the polynomial growth condition in the measure $\mu$. By Lemma \ref{radius control}, we can replace $F$ with $\tilde{F}= \{z \in B_0: d(z,b\Omega) \leq \alpha' \gamma^{\frac{1}{m}} R_0\}$.  By inflating $B_0$ if necessary, we can assume without loss of generality $R_0>d(z_0,b\Omega)$ so that $B_0$ touches $b\Omega$. Then Lemma \ref{measure of top of ball} establishes the claim. 

Now we prove that $\mathcal{B}_p$ weights satisfy a kind of ``fairness" property that is characteristic of $A_\infty$ weights.  As in the previous proofs, define a regularized weight as follows:
$$\sigma'(z)=R(\sigma)(z)= \frac{1}{\mu(B(z))} \int_{B(z)} \sigma(w) \mathop{d \mu(w)},$$

\noindent where $B(z)=\{w \in \Omega: d(w,z)< k_0 d(z,b\Omega)\}$ for some appropriately chosen constant $k_0$.

Recall that by previous work, $\sigma' \in A_p$. First we show $\sigma'(B_0) \lesssim \sigma(B_0).$ Using basically the arguments of Lemma \ref{switch}, we can show that $$\sigma'(B_0) \lesssim \int_{\Omega} \sigma(\zeta) \frac{1}{\mu(B'(\zeta))}\int_{B_0 \cap B'(\zeta)} \mathop{d \mu(z)} \mathop{d \mu(\zeta)},$$

\noindent where $B'(\zeta)$ is some fixed inflation of $B(\zeta)$. We claim that we can inflate $B_0$ by a fixed amount to a ball $\hat{B}_0$ so that $\zeta \notin \hat{B}_0$  implies $B'(\zeta) \cap B_0= \emptyset$.

Then $$\sigma'(B_0) \lesssim \sigma(\hat{B_0}) \lesssim \sigma(B_0)$$ using the doubling property of $\sigma$.  We can use a similar argument to verify that $\sigma(F) \lesssim \sigma'(F)$. In particular, one can check that 

$$\sigma(F) \lesssim \int_{\Omega} \frac{1}{\mu(B(\zeta))} \int_{B(\zeta) \cap F} \sigma(z) \mathop{d \mu(z)} \mathop{d \mu(\zeta)}.$$

One can check there exists a constant $K_1 $ and an inflated ball $B_0'$ so that if we define the set $\hat{F}$

$$\hat{F}=\{z \in B_0': d(z,b\Omega) \leq K_1 \gamma^{\frac{1}{m}} R_0\}$$

\noindent then $\hat{F}$ has the property that if $\zeta \notin \hat{F}$, then $B(\zeta) \cap F= \emptyset$. Then $\sigma(F) \lesssim \sigma'(\hat{F}).$ Note that by the reasoning leading to the computation of the Lebesgue measure of $F$, $ \mu(\hat{F}) \lesssim \gamma^{\frac{1}{m}} \mu(B).$ Then notice we obtain, by the fairness property of $A_p$ weights 

$$\sigma(F) \lesssim \sigma'(\hat{F}) \lesssim  [\mu(\hat{F})/\mu(B_0')]^\delta \sigma'(B_0') \lesssim [\mu(\hat{F})/\mu(B_0)]^\delta \sigma(B_0),$$

\noindent and $[\mu(\hat{F})/\mu(B_0)]\lesssim \gamma^{\frac{1}{m}}$. Thus, the good-$\lambda$ inequality is demonstrated, renaming $\delta$ as $\frac{\delta}{m}$. The rest of the proof follows from standard relative distribution estimates.

\end{proof}

\begin{remark}
In principle one could track constants in the proof of sufficiency and obtain an upper quantitative estimate for the norm of $\mathcal{P}$ or $\mathcal{P}^+$ on $L^p_\sigma(\Omega)$ in terms of $[\sigma]_{\mathcal{B}_p}$. However, such an estimate would almost certainly not be sharp. We resolve this issue in \cite{HWW} using modern techniques of dyadic harmonic analysis as in \cite{RTW}.
\end{remark}

\section{The Necessity of the $\mathcal{B}_p$ Condition}

We would now like to consider whether the condition $\sigma \in \mathcal{B}_p$ is necessary for $\mathcal{P}$ to be bounded on $L^p_{\sigma}(\Omega)$. In what follows we obtain a partial answer to this question, valid for any simple domain $\Omega$. In the special case that $\Omega$ is strongly pseudoconvex, we will prove that the $\mathcal{B}_p$ condition is necessary. In general, we require additional hypotheses, in particular a lower bound on the kernel and the integrability of $\sigma$ and its dual, for our proof technique. We first prove a lemma which is valid for any simple domain where the Bergman kernel satisfies an appropriate lower estimate. This lemma is an analogue of \cite[Lemma 5]{Bekolle} and essentially the same argument is given.

\begin{lemma} \label{small balls} Suppose the Bergman kernel $K(z,w)$ on a simple domain $\Omega$ satisfies the following property: if $$\max\{d(z,b\Omega), d(w,b\Omega)\} \lesssim d(z,w)$$ and $d(z,w)$ is small enough,
\noindent then we have
$$|K(z,w)| \gtrsim \frac{1}{\mu(B(w,d(z,w)))},$$
where the implicit constants are universal for $\Omega$.
Let $B_1(\zeta_0,R)$ be a ball of small radius $R < \varepsilon_0$ touching $b\Omega$. Then there exists a ball $B_2$ of the same radius, touching $b\Omega$ with $d(B_1,B_2) \approx R$ so that if $f \geq 0$ is a function supported in $B_i$ and $z \in B_j$, with $i \neq j$ and $i,j \in \{1,2\}$, then we have

$$|\mathcal{P}[f](z)| \gtrsim \frac{1}{\mu(B_i)} \int_{B_i} f(w) \mathop{d \mu(w)}.$$

\begin{proof}
For simplicity, suppose $i=1$. Choose $B_2$ so that if $z \in B_2$ and $\zeta \in B_1$, we have the estimate $d(\zeta_0,z) \geq C_2 d(\zeta_0, \zeta)$, where $C_2$ is the constant that appears in the smoothness estimate. Then, estimate as follows (assuming $C_2$ is appropriately large):

\begin{eqnarray*}  |\mathcal{P}[f](z)| & = & \left|\int_{B_1}K(z,\zeta) f(\zeta) \mathop{d \mu(\zeta)}\right| \\
& \geq & \left|\int_{B_1}K(z,\zeta_0) f(\zeta) \mathop{d \mu(\zeta)}\right|-\int_{B_1}|K(z,\zeta_0)-K(z,\zeta)| f(\zeta) \mathop{d \mu(\zeta)}\\
& \geq & |K(z,\zeta_0)|\int_{B_1} f(\zeta) \mathop{d \mu(\zeta)}-C_1\int_{B_1}\left(\frac{d(\zeta_0,\zeta)}{d(\zeta_0,z)}\right)^{\nu} \frac{1}{\mu(B(\zeta_0,d(\zeta_0,z)))} f(\zeta) \mathop{d \mu(\zeta)}\\
& \geq & |K(z,\zeta_0)|\int_{B_1} f(\zeta) \mathop{d \mu(\zeta)}-\frac{C_1}{C_2^\nu\mu(B(\zeta_0,d(\zeta_0,z)))}\int_{B_1} f(\zeta) \mathop{d \mu(\zeta)}\\
& \gtrsim & \frac{1}{\mu(B(\zeta_0,d(\zeta_0,z)))} \int_{B_1} f(\zeta) \mathop{d \mu(\zeta)}\\
& \approx & \frac{1}{\mu(B_1)} \int_{B_1} f(w) \mathop{d \mu(w)}.
\end{eqnarray*}

\noindent Note in the penultimate estimate we use the hypothesis of the lower bound on the kernel.
\end{proof}

\end{lemma}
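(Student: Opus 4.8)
The plan is to imitate B\'{e}koll\`{e}'s argument for \cite[Lemma 5]{Bekolle}, reducing everything to the size and smoothness estimates of Theorem~\ref{size and smoothness} together with the assumed lower bound on the kernel. The first step is to pin down the geometry: starting from the ball $B_1 = B(\zeta_0,R)$ touching $b\Omega$, I would choose $B_2 = B(\eta_0,R)$ with the same (small) radius and touching $b\Omega$, with $d(B_1,B_2) \approx R$, in such a way that whenever $z \in B_2$ and $\zeta \in B_1$ one has $d(\zeta_0,z) \geq C_2\, d(\zeta_0,\zeta)$, where $C_2$ is the constant from the smoothness estimate. This is possible because $d(\zeta_0,\zeta) < cR$ for $\zeta \in B_1$ while $d(\zeta_0,z) \gtrsim R$ for $z \in B_2$ once $B_2$ is separated by a suitable fixed multiple of $R$; one just enlarges that multiple to absorb $C_2$ and the quasi-triangle constant. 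Since both balls touch the boundary and have radius $\approx R$, this same separation also guarantees $\max\{d(\zeta_0,b\Omega), d(z,b\Omega)\} \lesssim d(\zeta_0,z)$, so the lower-bound hypothesis on the kernel applies to the pair $(\zeta_0, z)$, giving $|K(z,\zeta_0)| \gtrsim 1/\mu(B(\zeta_0,d(\zeta_0,z)))$; moreover $d(\zeta_0,z) \approx R$ forces $\mu(B(\zeta_0,d(\zeta_0,z))) \approx \mu(B_1)$ by doubling.

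The second step is the estimate itself, written out in the displayed chain above: for $f \geq 0$ supported in $B_1$ and $z \in B_2$, freeze the kernel at $\zeta_0$, i.e.\ write
$$|\mathcal{P}[f](z)| \geq |K(z,\zeta_0)| \int_{B_1} f\, d\mu - \int_{B_1} |K(z,\zeta_0) - K(z,\zeta)|\, f(\zeta)\, d\mu(\zeta).$$
The main term is $\gtrsim \mu(B_1)^{-1} \int_{B_1} f\, d\mu$ by the previous paragraph. For the error term, apply the smoothness estimate (legitimate since $d(\zeta_0,z) \geq C_2 d(\zeta_0,\zeta)$) to bound $|K(z,\zeta_0)-K(z,\zeta)|$ by $C_1 (d(\zeta_0,\zeta)/d(\zeta_0,z))^\nu \mu(B(\zeta_0,d(\zeta_0,z)))^{-1}$, and then use $d(\zeta_0,\zeta)/d(\zeta_0,z) \leq 1/C_2$ on $B_1$ to pull out a factor $C_2^{-\nu}$; this leaves $\frac{C_1}{C_2^{\nu}} \mu(B(\zeta_0,d(\zeta_0,z)))^{-1} \int_{B_1} f\, d\mu$. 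Choosing $C_2$ large enough that $C_1/C_2^\nu$ is strictly smaller than the implicit constant in the lower bound for the main term, the error term is absorbed, and what survives is $\gtrsim \mu(B(\zeta_0,d(\zeta_0,z)))^{-1}\int_{B_1} f\, d\mu \approx \mu(B_1)^{-1} \int_{B_1} f\, d\mu$. The case $i=2$, $j=1$ is symmetric: by the symmetry of the setup one re-runs the argument with the roles of $B_1$ and $B_2$ (and of $\zeta_0$ and $\eta_0$) interchanged, using that $d(B_1,B_2) \approx R$ is symmetric.

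I expect the only genuinely delicate point to be the \emph{simultaneous} choice of the separation constant in the definition of $B_2$: it must be large enough to (i) make the smoothness estimate applicable, (ii) make $C_1/C_2^\nu$ small enough to absorb the error, and (iii) keep $d(\zeta_0,z) \approx R$ so that doubling gives $\mu(B(\zeta_0,d(\zeta_0,z))) \approx \mu(B_1)$ and so that the kernel lower bound's hypothesis $\max\{d(\zeta_0,b\Omega),d(z,b\Omega)\} \lesssim d(\zeta_0,z)$ holds. All three are fixed, absolute constraints that can be met at once by taking $C_2$ sufficiently large and then choosing $B_2$ accordingly; one also needs $R < \varepsilon_0$ small so that the kernel lower bound (which is only assumed for small $d(z,w)$) is in force. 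Everything else is a routine manipulation of the quasi-metric, doubling, and the two kernel estimates from Theorem~\ref{size and smoothness}.
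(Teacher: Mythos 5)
Your proposal is correct and follows essentially the same route as the paper: choose $B_2$ separated from $B_1$ by a suitable multiple of $R$ so the smoothness estimate applies, freeze the kernel at $\zeta_0$, absorb the error term by taking $C_2$ large, and invoke the assumed kernel lower bound together with doubling. You are somewhat more explicit than the paper in checking that the lower-bound hypothesis $\max\{d(\zeta_0,b\Omega),d(z,b\Omega)\}\lesssim d(\zeta_0,z)$ actually holds and in noting the symmetry for $i=2$, but the argument is the same.
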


Using this lemma we obtain the following theorem, which grants the necessity of the $\mathcal{B}_p$ condition under certain conditions.

\begin{theorem}\label{necessity} Suppose the Bergman kernel $K(z,w)$ on a simple domain $\Omega$ satisfies the lower bound in Lemma \ref{small balls}. Then if $\mathcal{P}$ maps $L^p_\sigma(\Omega)$ to $L^p_\sigma(\Omega)$ and additionally $\sigma$ and $\sigma^{-\frac{1}{p-1}}$ are integrable, we must have $\sigma \in \mathcal{B}_p.$

\begin{proof}
We follow closely a standard argument in harmonic analysis that is used, for example, in proving the necessity of the $A_p$ condition for the Hilbert/Riesz transforms (see, for example, the proof of \cite[Theorem 7.47]{G})). 

First, we note that the assumption that $\sigma$ and its dual are integrable allows us to consider only small balls as in the proof of Lemma \ref{small balls} when we compute the $\mathcal{B}_p$ characteristic. Let $B_1$ and $B_2$ be two small balls as considered in the lemma, and $f$ a positive function supported on $B_1$. For notational convenience, let $\langle f \rangle_{B}$ denote the average of $f$ over $B$. Note that Lemma \ref{small balls} implies:

$$B_2 \subseteq \{\mathcal{P}(f)(z)\geq c \langle f \rangle_{B_1}\}$$

\noindent where $c$ is the implicit constant in the lemma. Let $\mathcal{A}=||\mathcal{P}||_{L^p_\sigma(\Omega)}$. Using the fact that $\mathcal{P}$ is bounded on $L^p_\sigma(\Omega)$, we obtain:

\begin{equation} \sigma(B_2) \lesssim \frac{\mathcal{A}^p }{\left(\langle f \rangle_{B_1}\right)^p}\int_{\Omega} |f|^p \sigma \mathop{d \mu} \tag{1}\label{1}.\end{equation} 

Note we may interchange the roles of $B_1$ and $B_2$ to obtain

\begin{equation}\sigma(B_1) \lesssim \frac{\mathcal{A}^p }{\left(\langle f \rangle_{B_2}\right)^p}\int_{\Omega} |f|^p \sigma \mathop{d \mu}\tag{2}\label{2}.\end{equation}

Now take $f= \chi_{B_2}$ to obtain $\sigma(B_1) \lesssim \mathcal{A}^p \sigma(B_2)$. Then substitute this into \eqref{1} to obtain 

\begin{equation}\sigma(B_1) \lesssim \frac{\mathcal{A}^{2p} }{\left(\langle f \rangle_{B_1}\right)^p}\int_{\Omega} |f|^p \sigma \mathop{d \mu} \tag{3} \label{3}.\end{equation}

Finally, take $f=\sigma^{-\frac{1}{p-1}} \chi_{B_1}$ and substitute into $\eqref{3}$ to obtain

$$ \langle \sigma \rangle_{B_1} \left ( \langle \sigma^{-\frac{1}{p-1}} \rangle_{B_1} \right)^{p-1} \lesssim \mathcal{A}^{2p}$$

\noindent which completes the proof.

\end{proof}

\end{theorem}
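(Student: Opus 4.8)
The plan is to mimic the classical harmonic analysis argument for the necessity of the $A_p$ condition for singular integrals, exploiting the two-ball lower bound from Lemma~\ref{small balls}. First I would observe that the integrability hypotheses on $\sigma$ and $\sigma' = \sigma^{-1/(p-1)}$ reduce the supremum in $[\sigma]_{\mathcal{B}_p}$ to a supremum over \emph{small} balls touching $b\Omega$: indeed, balls of radius bounded below have measure bounded below, and both $\int \sigma$ and $\int \sigma'$ are finite, so the Muckenhoupt ratio over large balls is uniformly bounded by a constant depending only on $\sigma(\Omega)$, $\sigma'(\Omega)$ and the lower bound on $\mu$ of non-small balls. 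Hence it suffices to control the ratio over any small ball $B_1 = B_1(\zeta_0,R)$ with $R < \varepsilon_0$.

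Next, given such a $B_1$, invoke Lemma~\ref{small balls} to produce the companion ball $B_2$ of the same radius, touching $b\Omega$, with $d(B_1,B_2) \approx R$, having the property that for any $f \geq 0$ supported in $B_1$ and $z \in B_2$ we have $|\mathcal{P}f(z)| \gtrsim \langle f\rangle_{B_1}$. This containment $B_2 \subseteq \{|\mathcal{P}f| \geq c\langle f\rangle_{B_1}\}$, combined with Chebyshev's inequality and the assumed boundedness $\|\mathcal{P}\|_{L^p_\sigma \to L^p_\sigma} = \mathcal{A}$, yields the first key estimate $\sigma(B_2) \lesssim \mathcal{A}^p (\langle f\rangle_{B_1})^{-p} \int_\Omega |f|^p\sigma\,d\mu$. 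By the symmetry in the roles of $B_1$ and $B_2$ furnished by the lemma (the ``$i\neq j$'' clause), the analogous inequality with the two balls swapped also holds. I would then run the standard bootstrapping: first take $f = \chi_{B_2}$ in the swapped inequality to get $\sigma(B_1) \lesssim \mathcal{A}^p\sigma(B_2)$; feed this back into the first estimate to remove $\sigma(B_2)$ entirely and obtain $\sigma(B_1) \lesssim \mathcal{A}^{2p}(\langle f\rangle_{B_1})^{-p}\int_\Omega |f|^p\sigma\,d\mu$ for any nonnegative $f$ supported in $B_1$; and finally specialize to $f = \sigma' \chi_{B_1} = \sigma^{-1/(p-1)}\chi_{B_1}$, which after dividing through and recognizing that $\int_{B_1}(\sigma')^p\sigma\,d\mu = \int_{B_1}\sigma'\,d\mu$ collapses to $\langle\sigma\rangle_{B_1}\big(\langle\sigma'\rangle_{B_1}\big)^{p-1} \lesssim \mathcal{A}^{2p}$. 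Taking the supremum over all small $B_1$ then gives $[\sigma]_{\mathcal{B}_p} < \infty$.

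The only genuinely delicate point is the reduction to small balls: one must be careful that the "large ball" contribution to the $\mathcal{B}_p$ supremum is genuinely harmless, which requires the integrability of \emph{both} $\sigma$ and $\sigma'$ together with the strong homogeneity property to bound $\mu(B)$ from below for balls of radius bounded away from zero. Everything else is the textbook argument (as in \cite[Theorem 7.47]{G}), so the proof should be short once Lemma~\ref{small balls} is in hand. One should also note that the lemma must be applied with $f$ allowed to be $\chi_{B_2}$ supported in $B_2$ and $z \in B_1$ for the symmetric inequality \eqref{2}, which is exactly the content of the "$i,j \in \{1,2\}$, $i \neq j$" hypothesis, so no extra work is needed there.
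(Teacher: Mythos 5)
Your proposal is correct and follows essentially the same line as the paper: reduce to small balls via integrability of $\sigma$ and $\sigma'$, invoke Lemma~\ref{small balls} with Chebyshev to get the two-ball estimates, bootstrap by taking $f=\chi_{B_2}$, and finally specialize to $f=\sigma^{-1/(p-1)}\chi_{B_1}$. The extra detail you give on why the large-ball contribution is harmless (lower bounds on $\mu(B)$ for non-small balls) is implicit in the paper's brief remark and is a sound way to justify that step.
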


We next show that if $\Omega$ is strongly pseudoconvex and $\mathcal{P}$ is bounded on $L^p_\sigma(\Omega)$, then it follows that $\sigma, \sigma^{-1/p-1}$ are integrable on $\Omega$. 

\begin{lemma}\label{integrability}Let $\Omega$ be strongly pseudoconvex with smooth boundary.
Suppose $\mathcal{P}$ is bounded on $L^p_\sigma(\Omega)$. Then $\sigma, \sigma^{-1/p-1} \in L^1(\Omega).$
\begin{proof}

It suffices to prove $\sigma^{-\frac{1}{p-1}} \in L^1(\Omega)$. Then the integrability of $\sigma$ follows by a duality argument. Indeed, if $\mathcal{P}$ is bounded on $L^p_{\sigma}(\Omega)$, then since the Bergman projection is self-adjoint $\mathcal{P}$ is also bounded on $L^q_{\sigma'}(\Omega)$, where $q$ is the dual exponent to $p$ and $\sigma'=\sigma^{-\frac{1}{p-1}}$. The same arguments then imply that $(\sigma^{-\frac{1}{p-1}})^{-\frac{1}{q-1}}=\sigma$ is integrable. 

We first claim that there exists an $\varepsilon>0$ so that for any $w\in \Omega$, there exists a point $z_0 \in \Omega$ (depending on $w$) so that for all $z$ in a small neighborhood of $z_0$ (call it $N_{z_0}$) and $w' \in B(w, \varepsilon)$, we have $|K(z,w')|\approx 1$ and for any $z_1,z_2 \in N_{z_0}$ and $w' \in B(w,\varepsilon)$, $\arg\{ K(z_1,w'),K(z_2,w')\} \in [-\frac{1}{3},\frac{1}{3}].$ Here $B(w,\varepsilon)$ denotes the Euclidean ball of radius $\varepsilon$. To see this, note that if $z_0$ is chosen so $\operatorname{dist}(z_0,b\Omega)> 1,$ then $|K(z,w')|\lesssim 1$ by Kerzman's result that the Bergman kernel extends to a $C^\infty$ function off the boundary diagonal. So it remains to show that there exists an $\varepsilon>0$ so $|K(z,w')|\gtrsim 1$ for $z,w'$ as above, and that the argument condition is satisfied. The argument condition again follows from Kerzman's theorem, perhaps by shrinking $N_{z_0}$ sufficiently small. Suppose the remainder of the claim is not true. Then there is a sequence of points $w_n$ so for each $z$ satisfying $\operatorname{dist}(z,b \Omega)>1$ and $n$, there is a point $w_n' \in B(w_n, \frac{1}{n})$ so that $|K(z,w_n')|< \varepsilon_n$, where $\varepsilon_n$ is a sequence that tends to $0$. Passing to a subsequence, we have that $w_n' \rightarrow w'' \in \bar{\Omega}$ with $K(z, w'')=0$ for all $z$ with $\operatorname{dist}(z,b \Omega)>1$ (note that $w_n'$ depends on $z$ but the limit point $w''$ does not). First consider the case when $w'' \in \Omega$. Then we immediately get a contradiction, since $\{z: \operatorname{dist}(z,b \Omega)>1\}$ is open in $\mathbb{C}^n,$ while the zero set of $K(\cdot,w'')$ is a complex variety of complex codimension one (note $K(\cdot,w'')$  is not identically zero). 

Note that in fact we can repeat this procedure for each $n$ taking $z_0$ so $\operatorname{dist}(z_0,b\Omega)> \frac{1}{n}$. Then in fact we will obtain a sequence of limit points $w''_n$. Then passing to a subsequence if necessary, we can assume that $w''_n \rightarrow w^* \in \bar{\Omega}$. By the argument above, we may assume $w^* \in b\Omega$. For each $n$, we can select a $z_n$ so $\operatorname{dist}(w^*,z_n)\leq \frac{2}{n}$ and $\operatorname{dist}(z_n, b\Omega) > \frac{1}{n}$. Then clearly $z_n \rightarrow w^*$ and also $K(z_n,w''_n)=0$ for all $n$. Looking at the asymptotic expansion for the Bergman kernel in the strongly pseudoconvex case obtained in \cite{Bout}, we see that this is impossible. In particular, the asymptotic expansion takes the following form:
$$K(z,w)=a(z,w)\psi(z,w)^{-n-1},$$ where $a$ is continuous on $\overline{\Omega} \times \overline{\Omega}$ and is non-vanishing on $\triangle(b \Omega \times b \Omega)$, and $\psi$ is $C^\infty$ on $\overline{\Omega} \times \overline{\Omega}$ with certain additional properties. In particular, $\psi$ vanishes on the boundary diagonal. Thus, clearly we must have $a(w^*,w^*)=0$. But this is impossible as $a$ does not vanish on the boundary diagonal. This establishes the claim.

We now show that the claim implies the integrability of $\sigma^{-\frac{1}{p-1}}$. First, let $f \in L^p_\sigma(\Omega)$ be a positive function. We claim $f \in L^1(\Omega)$. Fix $w \in \Omega$ and let $\varepsilon$ and $z_0$ be as in the above claim. Then the function $F(w'):=K(z_0,w')^{-1}f(w')\chi_{B(w,\varepsilon)}(w') \in L^p_\sigma(\Omega)$ by the claim. Notice

$$\mathcal{P}(F)(z)= \int_{\Omega \cap B(w,\varepsilon)} \frac{K(z,w')}{K(z_0,w')} f(w') \mathop{d \mu(w')}$$

\noindent is in $L^p_\sigma(\Omega)$ by hypothesis and hence is finite almost everywhere. Thus in particular there exists a $z'$ in $N_{z_0}$ so $|\mathcal{P}(f)(z')|<\infty$. But then this implies, using the argument condition,

$$\left| \int_{\Omega \cap B(w,\varepsilon)} f(w') \mathop{d \mu(w')}\right|<\infty.$$

It is then possible to choose a finite covering $B(w_1,\varepsilon),\dots B(w_n,\varepsilon)$ of $\Omega$, which thus implies $f \in L^1(\Omega)$. 

Now, suppose to the contrary that $\sigma^{-1/p-1}$ is not integrable. Then there exists a positive function $g \in L^p(\Omega)$ so that $\int_{\Omega} g \sigma^{-1/p} \mathop{d \mu}=\infty$. But then taking $f=g\sigma^{-1/p}$, we see $f \in L^p_\sigma(\Omega)$. This implies $f \in L^1(\Omega)$, a contradiction since we know $f \notin L^1(\Omega)$.
\end{proof}
\end{lemma}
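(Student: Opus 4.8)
The plan is to prove Lemma~\ref{integrability} by a duality-plus-compactness argument. First I would reduce to showing only $\sigma^{-1/(p-1)} \in L^1(\Omega)$: since the Bergman projection is self-adjoint, boundedness of $\mathcal{P}$ on $L^p_\sigma(\Omega)$ is equivalent to boundedness on $L^q_{\sigma'}(\Omega)$ with $\sigma' = \sigma^{-1/(p-1)}$ and $q$ the conjugate exponent, and applying the same statement to $(\sigma', q)$ yields $(\sigma')^{-1/(q-1)} = \sigma \in L^1(\Omega)$. So the whole weight of the proof rests on the single integrability claim.

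Next I would establish the key geometric/analytic input: there is a fixed $\varepsilon > 0$ so that every $w \in \Omega$ admits a ``test point'' $z_0 = z_0(w)$ sitting deep in the interior (say $\operatorname{dist}(z_0, b\Omega) > 1$) together with a small neighborhood $N_{z_0}$ on which $|K(z,w')| \approx 1$ and the argument of $K(z,w')$ varies by at most $1/3$ radians, uniformly for $w' \in B(w,\varepsilon)$. The upper bound $|K(z,w')| \lesssim 1$ is immediate from Kerzman's theorem (the kernel is smooth off the boundary diagonal, hence bounded on compacta away from it). The argument-oscillation bound also follows from Kerzman by shrinking $N_{z_0}$. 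The nontrivial part is the lower bound $|K(z,w')| \gtrsim 1$, uniform in $w$; I would argue this by contradiction via a compactness argument, extracting a limit point $w'' \in \overline{\Omega}$ at which $K(\cdot, w'')$ vanishes identically on the open interior set $\{\operatorname{dist}(\cdot, b\Omega) > 1\}$. If $w'' \in \Omega$ this contradicts the fact that $K(\cdot, w'')$ has zero set a proper complex subvariety; if $w''$ escapes to $b\Omega$ one iterates the construction with shrinking interior thresholds to produce a sequence $z_n \to w^* \in b\Omega$ with $K(z_n, w_n'') = 0$, then invokes the Fefferman--Boutet de Monvel--Sj\"ostrand asymptotic expansion $K(z,w) = a(z,w)\psi(z,w)^{-n-1}$ with $a$ nonvanishing on the boundary diagonal to get a contradiction, since the vanishing would force $a(w^*, w^*) = 0$.

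Granting the claim, I would then show it forces $L^p_\sigma(\Omega) \subset L^1(\Omega)$. Given a positive $f \in L^p_\sigma(\Omega)$ and any $w \in \Omega$, form $F(w') = K(z_0, w')^{-1} f(w') \chi_{B(w,\varepsilon)}(w')$; by the $|K| \approx 1$ bound, $F \in L^p_\sigma(\Omega)$, so $\mathcal{P}(F)$ lies in $L^p_\sigma(\Omega)$ by hypothesis and hence is finite a.e.\ on $N_{z_0}$. Picking such a point $z'$ and using the argument-oscillation condition to prevent cancellation in $\int K(z',w')/K(z_0,w') f(w')\,d\mu(w')$, one reads off $\int_{B(w,\varepsilon) \cap \Omega} f\,d\mu < \infty$. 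A finite cover of $\overline{\Omega}$ by such balls gives $f \in L^1(\Omega)$. Finally, if $\sigma^{-1/(p-1)} \notin L^1$, one can choose $g \in L^p(\Omega)$ positive with $\int g \sigma^{-1/p}\,d\mu = \infty$; then $f = g\sigma^{-1/p} \in L^p_\sigma(\Omega)$ but $f \notin L^1(\Omega)$, contradicting the inclusion just proved. The main obstacle is the uniform lower bound $|K(z,w')| \gtrsim 1$ in the claim — this is where strong pseudoconvexity is genuinely used, through the nonvanishing of the leading coefficient $a$ in the boundary asymptotics, and the compactness extraction has to be set up carefully so that the limit point's location (interior vs.\ boundary) is handled in both cases.
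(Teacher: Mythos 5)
Your proposal matches the paper's proof essentially line for line: the duality reduction to $\sigma^{-1/(p-1)} \in L^1$, the uniform $|K(z,w')|\approx 1$ and argument-oscillation claim established via Kerzman plus a compactness argument (split into interior and boundary limit cases, with the boundary case handled by the Boutet de Monvel--Sj\"{o}strand expansion and nonvanishing of $a$ on the boundary diagonal), and the resulting inclusion $L^p_\sigma(\Omega) \subset L^1(\Omega)$ via the test function $F = K(z_0,\cdot)^{-1} f \chi_{B(w,\varepsilon)}$ and a finite cover. There is no substantive difference in structure or key ideas.
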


Finally, we show that strongly pseudoconvex domains also satisfy the necessary lower bound on the Bergman kernel, so the $\mathcal{B}_p$ condition is both necessary and sufficient in this case.

\begin{corollary}
 Let $\Omega$ be strongly pseudoconvex with smooth boundary. Then if $\mathcal{P}$ is bounded on $L^p_\sigma(\Omega)$, $\sigma \in \mathcal{B}_p$.
 \begin{proof}
Throughout the proof, we assume that $d(z,w)$ is chosen sufficiently small and that $\max\{ d(z,b\Omega),d(w,b\Omega)\} \lesssim d(z,w)$. As above, by a result of Boutet and Sj\"{o}strand (\cite{Bout}), we have $K(z,w)=a(z,w)\psi(z,w)^{-n-1}$, where $a$ is $C^\infty$ on $\overline{\Omega} \times \overline{\Omega} \setminus \triangle(b \Omega \times b \Omega)$ and continuous on $\overline{\Omega} \times \overline{\Omega}$, $a$ does not vanish on the diagonal sufficiently close to the boundary, and $\psi$ is a $C^\infty$ function with $\psi(z,z)= -\rho(z)$, and the additional condition that $\partial_w \psi$, $\overline{\partial}_z \psi$ are vanishing of infinite order on the diagonal $w=z$.
We claim that if we choose $d(z,w)$ small enough then we have $|\psi(z,w)| \lesssim d(z,w)$. To see this, note that Taylor's theorem together with the conditions on $\psi$ imply
$$|\psi(z,w)| \leq |\rho(w)|+ \left|\sum_{j=1}^n \frac{\partial \rho(w)}{\partial z_j}(z_j-w_j)+\frac{1}{2}\sum_{j,k=1}^{n}\frac{\partial^2 \rho(w)}{\partial z_j \partial z_k}(z_j-w_j)(z_k-w_k)\right|+\mathcal{O}(|z-w|^2).$$
On the other hand, the quasi-metric can be explicitly written down (locally) using a biholomorphic change of coordinates centered at $w$ (see \cite{Mc5}). First, we may by a unitary rotation plus normalization and translation assume $\partial \rho(w)=d z_1$ and $w=0$. Then in these coordinates, $\sum_{j=1}^{n}\frac{\partial \rho (w)}{\partial z_j}(z_j-w_j)=z_1$. Then, define holomorphic coordinates $\zeta= (\zeta_1,\dots,\zeta_n)$ as follows:
$$\zeta_1=z_1+\frac{1}{2}\sum_{j,k=1}^{n}\frac{\partial^2 \rho(w)}{\partial z_j \partial z_k}z_jz_k, \hspace{0.2 cm} \zeta_j=z_j, j=2,\dots,n.  $$In particular, 
$$d(z,w)\approx |z_1-w_1|+\sum_{j=2}^{n}|z_j-w_j|^{2},$$
\noindent where the components of $z$ and $w$ are computed in $\zeta$ coordinates.
Since $|\rho(w)| \lesssim d(z,w)$ by hypothesis, then it is clear, applying the change of variables, that $|\psi(z,w)| \lesssim d(z,w).$ Finally, it is easy to verify that in the strongly pseudoconvex case, $\mu(B(z,r)) \approx r^{n+1}$, because $\tau_j(z,\delta)=\delta^{1/2}$ for $j=2,\dots,n$ (see \cite{Mc5} for instance). Therefore, if $d(z,w)$ is chosen appropriately small we can obtain the following lower bounds:
\begin{eqnarray*}
|K(z,w)| & \gtrsim & |\psi(z,w)|^{-n-1}\\
& \gtrsim & (d(z,w))^{-n-1}\\
& \approx & \frac{1}{\mu(B(w,d(z,w)))}.
\end{eqnarray*}
This concludes the proof.
\end{proof}
\end{corollary}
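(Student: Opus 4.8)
The plan is to establish, for strongly pseudoconvex $\Omega$, the lower bound on the Bergman kernel hypothesized in Lemma~\ref{small balls}, and then simply combine this with Theorem~\ref{necessity} and Lemma~\ref{integrability}. The latter two results already do all the heavy lifting: Lemma~\ref{integrability} supplies the integrability of $\sigma$ and $\sigma^{-1/(p-1)}$ (which is needed both to apply Theorem~\ref{necessity} and to reduce the $\mathcal{B}_p$ supremum to small balls), while Theorem~\ref{necessity} concludes that $\sigma \in \mathcal{B}_p$ once the kernel lower bound is known. So the entire content of the corollary is the verification of that kernel lower bound.

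To prove the lower bound, I would invoke the Boutet de Monvel--Sj\"{o}strand asymptotic expansion $K(z,w) = a(z,w)\psi(z,w)^{-n-1}$, where $a$ is continuous on $\overline{\Omega}\times\overline{\Omega}$, nonvanishing on the boundary diagonal (hence bounded below in modulus on a neighborhood of it), and $\psi$ is $C^\infty$ with $\psi(z,z) = -\rho(z)$ and with $\partial_w\psi$, $\overline{\partial}_z\psi$ vanishing to infinite order on the diagonal. The key intermediate claim is that, for $d(z,w)$ sufficiently small and under the standing hypothesis $\max\{d(z,b\Omega),d(w,b\Omega)\}\lesssim d(z,w)$, one has $|\psi(z,w)|\lesssim d(z,w)$. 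I would prove this by Taylor-expanding $\psi(\cdot,w)$ around $z=w$: the infinite-order vanishing of the mixed derivatives kills all the ``holomorphic in $z$, antiholomorphic in $w$'' cross terms, leaving $|\psi(z,w)|$ controlled by $|\rho(w)|$, the first-order term $\sum_j \partial_{z_j}\rho(w)(z_j - w_j)$, the second-order holomorphic term $\tfrac12\sum_{j,k}\partial^2_{z_j z_k}\rho(w)(z_j-w_j)(z_k-w_k)$, plus an $\mathcal{O}(|z-w|^2)$ remainder. I would then recall McNeal's explicit description of the quasi-metric in the strongly pseudoconvex case: after a unitary rotation and translation normalizing $w=0$ and $\partial\rho(w)=dz_1$, and after the holomorphic change of coordinates $\zeta_1 = z_1 + \tfrac12\sum_{j,k}\partial^2_{z_j z_k}\rho(w)\,z_j z_k$, $\zeta_j = z_j$ for $j\geq 2$, one has $d(z,w)\approx |z_1 - w_1| + \sum_{j=2}^n |z_j - w_j|^2$ in $\zeta$-coordinates. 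Matching terms: $|\rho(w)|\lesssim d(z,w)$ is the hypothesis, the first-order term becomes exactly $|\zeta_1 - \text{(first part)}|$, the second-order holomorphic term is absorbed into the $\zeta_1$ coordinate, and $|z-w|^2 = \mathcal{O}(\sum|z_j-w_j|^2)$ is dominated by $d(z,w)$ when $d(z,w)$ is small. Hence $|\psi(z,w)|\lesssim d(z,w)$.

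With the $\psi$ estimate in hand, the lower bound follows quickly: $|K(z,w)| = |a(z,w)|\,|\psi(z,w)|^{-n-1} \gtrsim |\psi(z,w)|^{-n-1} \gtrsim d(z,w)^{-n-1}$ using the lower bound on $|a|$ near the diagonal. Finally, in the strongly pseudoconvex case $\tau_j(z,\delta) = \delta^{1/2}$ for $j = 2,\dots,n$, so $\mu(B(z,r))\approx r^{n+1}$; therefore $d(z,w)^{-n-1}\approx 1/\mu(B(w,d(z,w)))$, which is exactly the hypothesis of Lemma~\ref{small balls}. Then Lemma~\ref{integrability} gives integrability of $\sigma$ and $\sigma^{-1/(p-1)}$, and Theorem~\ref{necessity} delivers $\sigma\in\mathcal{B}_p$.

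The main obstacle is the claim $|\psi(z,w)|\lesssim d(z,w)$, and specifically making rigorous the matching between the analytic Taylor terms of $\psi$ and the geometric terms in McNeal's formula for the quasi-metric. One has to be careful that the change of coordinates defining $\zeta$ is the same one (up to harmless equivalence) that underlies McNeal's description of $d$, that the remainder $\mathcal{O}(|z-w|^2)$ is genuinely controlled by $d(z,w)$ rather than by something smaller like $d(z,w)^2$ (it is fine here because $|z-w|^2 \lesssim |z_1-w_1| + \sum_{j\geq 2}|z_j-w_j|^2$ once $|z-w|$ is small), and that the infinite-order vanishing of $\partial_w\psi$ and $\overline\partial_z\psi$ really does eliminate every term that would otherwise be of order $|z-w|$ but not controlled by $d(z,w)$. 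Everything else is bookkeeping.
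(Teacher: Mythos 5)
Your proposal is correct and follows essentially the same path as the paper: establish the kernel lower bound via the Boutet de Monvel--Sj\"ostrand expansion, prove $|\psi(z,w)|\lesssim d(z,w)$ by Taylor expansion matched against McNeal's coordinate formula for the quasi-metric in the strongly pseudoconvex case, then combine with Lemma~\ref{integrability} and Theorem~\ref{necessity}. The caveats you flag at the end (identifying McNeal's coordinates, controlling the $\mathcal{O}(|z-w|^2)$ remainder by $d(z,w)$) are precisely the points the paper's proof handles implicitly.
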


\section{Concluding Remarks}

We have proven for certain large classes of pseudoconvex domains that the Bergman projection is bounded on weighted $L^p$ spaces where the weight $\sigma$ belongs to an appropriate $\mathcal{B}_p$ class. We have also obtained a partial converse, making an additional assumption on the kernel and integrability of the weight. To extend these results to much broader classes of domains using a similar approach, it is likely one would either have to obtain new estimates on the Bergman kernel (for example, on general (weakly) pseudoconvex domains of finite type) or adopt the approach in \cite{LS} used for domains of minimal regularity.

\noindent

\noindent \author{Zhenghui Huo\\
Department of Mathematics and Statistics\\
University of Toledo\\
{\tt  zhenghui.huo@utoledo.edu}\vspace{3mm}

\noindent \author{Nathan A. Wagner}\\
Department of Mathematics and Statistics\\
Washington University in St. Louis\\
{\tt nathanawagner@wustl.edu}

\vspace{0.5 cm}

\noindent \author{Brett D. Wick\\
Department of Mathematics and Statistics\\
Washington University in St. Louis\\
{\tt bwick@wustl.edu}\vspace{3mm}\\

\begin{thebibliography}{99}
\bibitem{Bekolle} B\'{e}koll\`e, David. \emph{In\'egalit\'e \`a poids pour le projecteur de Bergman dans la boule unité de $\mathbb{C}^n$}. Studia Math. \textbf{71} (1981/82), no. 3, 305-323.

\bibitem{Bekolle2}B\'{e}koll\`{e}, David; Bonami, Aline. \emph{In\'{e}galit\'{e}s \`{a} poids pour le noyau de Bergman}. (French) C. R. Acad. Sci. Paris Sér. A-B \textbf{286} (1978), no. 18, A775-–A778. 

\bibitem{Boas} Boas, Harold P. \emph{Extension of Kerzman's theorem on differentiability of the Bergman kernel function}. Indiana Univ. Math. J. \textbf{36} (1987), no. 3, 495-499. 

\bibitem{Bout} Boutet de Monvel, Louis; Sj\"{o}strand, Johannes. \emph{Sur la singularité des noyaux de Bergman et de Szeg\H{o}}. Journ\'{e}es: \'{E}quations aux D\'{e}riv\'{e}es Partielles de Rennes (1975), pp. 123–-164. Ast\'{e}risque, No. 34--35, Soc. Math. France, Paris, 1976.
.

\bibitem{DA} D'Angelo, J.P. \emph{Real hypersurfaces, orders of contact, and applications}. Ann. of Math. (2), \textbf{115}(3):615--637, 1982.

\bibitem{CF} Fefferman, Charles. \emph{The Bergman kernel and biholomorphic mappings of pseudoconvex
domains}. Invent. Math. \textbf{26} (1974), 1-65.

\bibitem{Duong} Duong, Xuan Thinh; MacIntosh, Alan. \emph{Singular integral operators with non-smooth kernels on irregular domains}. Rev. Mat. Iberoamericana \textbf{15} (1999), no. 2, 233-265.

\bibitem{G} Grafakos, Loukas. \emph{Classical Fourier analysis}. Third edition. Graduate Texts in Mathematics, 249. Springer, New York, 2014.

\bibitem{Hu} Gan, Chun; Hu, Bingyang; Khan, Ilyas. \emph{Dyadic decomposition of convex domains of finite type and applications}, arXiv:2001.07200 (Jan 2020).

\bibitem{HWW} Huo, Zhenghui; Wagner, Nathan; Wick, Brett. \emph{B\'{e}koll\`{e}-Bonami estimates on some pseudoconvex domains}, arXiv:2001.07868 (Jul 2020). 

\bibitem{Kerz} Kerzman, Norberto. \emph{The Bergman kernel function. Differentiability at the boundary}. Math. Ann. \textbf{195} (1972), 149-158. 

\bibitem{KR} Khanh, Tran Vu; Raich, Andrew. \emph{Local regularity properties for the Bergman projection on pseudoconvex domains of finite/infinite type}, arXiv:1406.6532 [math.CV] (Jun. 2014).

\bibitem{Mc3} McNeal, Jeffery D. \emph{Boundary behavior of the Bergman kernel function in $\mathbb{C}^2$}. Duke Math. J. \textbf{58} (1989), no. 2, 499-512.

\bibitem{Mc1} McNeal, Jeffery D. \emph{Estimates on the Bergman kernels of convex domains}. Adv. Math. \textbf{109} (1994), no. 1, 108-139.

\bibitem{Mc4} McNeal, Jeffery D. \emph{Local geometry of decoupled pseudoconvex domains}. Complex analysis (Wuppertal, 1991), 223-230, Aspects Math., E17, Friedr. Vieweg, Braunschweig, 1991.

\bibitem{Mc5} McNeal, Jeffery D. \emph{Subelliptic estimates and scaling in the $\overline{\partial}$-Neumann problem}. Explorations in complex and Riemannian geometry, 197-217, Contemp. Math., \textbf{332}, Amer. Math. Soc., Providence, RI, 2003.


\bibitem{Mc2} McNeal, Jeffery D. \emph{The Bergman projection as a singular integral operator}. J. Geom. Anal. \textbf{4} (1994), no. 1, 91-103.

\bibitem{Mc6} McNeal, Jeffery D. \emph{Uniform subelliptic estimates on scaled convex domains of finite type}. Proc. Amer. Math. Soc. 130 (2002), no. 1, 39–-47. 


\bibitem{McSt} McNeal, Jefferey D.; Stein, Elias M. \emph{Mapping properties of the Bergman projection on convex domains of finite type}. Duke Math. J. \textbf{73} (1994), no. 1, 177-99.

\bibitem{NRSW} Nagel, Alexander; Rosay, Jean-Pierre; Stein, Elias M.; Wainger, Stephen. \emph{Estimates for the Bergman and Szeg\H{o} kernels in $\mathbb{C}^2$}. Ann. of Math. (2) \textbf{129} (1989), no. 1, 113-149.

\bibitem{LS} Lanzani, Loredana; Stein, Elias M. \emph{The Bergman projection in $L^p$ for domains with minimal smoothness}. Illinois J. Math. \textbf{56} (2012), no. 1, 127-154. 

\bibitem{PS} Phong, Duong H.; Stein, Elias M. \emph{Estimates for the Bergman and Szegö projections on strongly pseudo-convex domains}. Duke Math. J. \textbf{44} (1977), no. 3, 695-704.

\bibitem{RTW} Rahm, Rob; Tchoundja, Edgar; Wick, Brett D. \emph{Weighted estimates for the Berezin transform and Bergman projection on the unit ball}. Math. Z. \textbf{286} (2017), no. 3-4, 1465-1478.



\end{thebibliography}
\end{document}